\numberwithin{equation}{section}
\theoremstyle{plain}
\newtheorem{thm}{Theorem}[section]
\newtheorem{lem}[thm]{Lemma}
\newtheorem{cor}[thm]{Corollary}
\newtheorem{prop}[thm]{Proposition}
\theoremstyle{definition}
\newtheorem{dfn}[thm]{Definition}
\newtheorem{defn}[thm]{Definition}
\newtheorem{rmk}[thm]{Remark}
\DeclareMathOperator{\cD}{\mathcal{D}}
\DeclareMathOperator{\cO}{\mathcal{O}}
\DeclareMathOperator{\cW}{\mathcal{W}}
\DeclareMathOperator{\cU}{\mathcal{U}}
\DeclareMathOperator{\R}{\mathbb{R}}
\DeclareMathOperator{\cC}{\mathcal{C}}
\DeclareMathOperator{\Zt}{{\mathscr{Z}}}
\DeclareMathOperator{\At}{{\mathscr{A}}}
\DeclareMathOperator{\Ht}{{\mathscr{H}}}
\DeclareMathOperator{\Kt}{{\mathscr{K}}}
\DeclareMathOperator{\Mt}{{\mathscr{M}}}
\DeclareMathOperator{\C}{\mathbb{C}}
\DeclareMathOperator{\tO}{\textswab{O}}
\DeclareMathOperator{\wf}{\mathrm{WF}}
\DeclareMathOperator{\awf}{\overline{\mathrm{WF}}}
\title[$\mathcal{C}^{\infty}$-hypoellipticity and extension of $CR$ functions]
{$\mathcal{C}^{\infty}$-hypoellipticity and extension of $CR$ functions}
\author[M.~Nacinovich]{Mauro Nacinovich}
\address{M.\ Nacinovich:
Dipartimento di Matematica\\ II Universit\`a di Roma
``Tor Ver\-ga\-ta''\\ Via della Ricerca Scientifica\\ 00133 Roma
(Italy)}
\email{nacinovi@mat.uniroma2.it}
\author[E.~Porten]{Egmont Porten}
\address{E.\ Porten: Department of Mathematics\\ 
Mid Sweden University\\ 85170 Sundsvall \\Sweden}
\email{Egmont.Porten@miun.se}
\date{July 10, 2011}
\subjclass[2000]{Primary: 32V20
Secondary: 32V05, 32V25, 32V30, 32V10, 32W10, 32D10, 35H10, 35H20, 35A18, 35A20,
35B65, 53C30}
\keywords{$CR$-hypoelliptic, $CR$-embedding, holomorphic extension,
$\cC^\infty$ wave front set, holomorphic wedge extension}
\begin{document}
\maketitle
\tableofcontents
\begin{abstract}
Let $M$ be a $CR$ submanifold of a complex manifold $X$.
The main result of this 
article is to show that $CR$-hypoellipticity at $p_0\in{M}$ is
necessary and sufficient for holomorphic extension 
of all germs of $CR$ functions
to an ambient
neighborhood in $X$. As an application, we obtain that 
$CR$-hypoellipticity implies  
the existence of generic embeddings and prove holomorphic extension
for a large class of 
$CR$ manifolds satisfying a higher order 
Levi pseudoconcavity condition.
\end{abstract}
\section{Introduction}
Let $M$ be an abstract $CR$ manifold, of arbitrary $CR$ dimension 
$m$ and
$CR$ codimension $d$.  
We say that $M$ is \emph{$CR$-hypoelliptic} at
$p_0\in{M}$ if every distribution satisfying the 
homogeneous tangential Cauchy-Riemann
equations on a neighborhood of $p_0$ in $M$ is 
$\cC^\infty$-smooth on a neighborhood of $p_0$. \par
A \emph{local $CR$-embedding} of $M$ at $p_0$ is the datum of
$\cC^\infty$-smooth solutions $z_1,\hdots,z_{\nuup}$ to the homogeneous
tangential Cauchy-Riemann equation on a neighborhood $U$ of $p_0$ in
$M$ such that the map $p\mapsto (z_1(p),\hdots,z_{\nuup}(p))$ is a
smooth embedding $U\hookrightarrow\C^{\nuup}$. We have $\nuup\geq{m+d}=n$,
and when we have equality we say that the local $CR$-embedding is
\emph{generic}. \par
Note that from any local $CR$-embedding we can obtain a generic local
$CR$-embedding of a smaller neighborhood of $p_0$,
by choosing any subset $z_{i_1},\hdots,z_{i_n}$ of $z_1,\hdots,z_{\nuup}$
with $dz_{i_1}(p_0)\wedge\cdots\wedge{d}z_{i_n}(p_0)\neq{0}$. \par
We say that $M$ has the 
\emph{holomorphic extension property} at $p_0$
if there is a generic local $CR$-embedding $\phiup:U\hookrightarrow\C^n$ 
such that,
for every distribution solution $u$ of the homogeneous tangential 
Cauchy-Riemann equations on a neighborhood $U'\subset{U}$ of $p_0$ in $M$,
there is a holomorphic function, defined on a neighborhood $V$ of
$\piup(p_0)$ in $\C^n$, such that $\phi^*\tilde{u}$ is defined and equal
to $u$ on a neighborhood of $p_0$ in $U'$. \par
We can also consider weaker formulations of the holomorphic extension
property, either by dropping the assumption that the local $CR$-embedding
$\phiup$ be generic, or allowing different embeddings for extending
different $CR$-distributions, or keeping a same local $CR$-embedding
but requiring local holomorphic extension only for smooth $CR$-functions. 
\par
The fact that the different formulations are in fact equivalent is
a consequence of our main result:
\begin{thm}\label{main} Let $M$ be a $CR$ manifold, locally $CR$-embeddable
at $p_0\in{M}$. Then $M$ 
has the holomorphic extension property at
$p_0$ if and only if $M$ is $CR$-hypoelliptic at $p_0$.
\end{thm}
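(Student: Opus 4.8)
The plan is to prove the two implications separately; the forward one is essentially formal, while the converse carries all of the analytic weight.

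Suppose first that $M$ has the holomorphic extension property at $p_0$, witnessed by a generic embedding $\phiup:U\hookrightarrow\C^n$. Let $u$ be any distribution solving the homogeneous tangential Cauchy--Riemann equations near $p_0$. By hypothesis there is a holomorphic $\tilde u$ on an open set $V\subset\C^n$ with $\phiup^*\tilde u=u$ near $p_0$. Since $\tilde u$ is holomorphic it is $\cC^\infty$ (indeed real-analytic), and $\phiup$ is a smooth embedding, so $u=\phiup^*\tilde u$ is $\cC^\infty$ near $p_0$. As $u$ was arbitrary, $M$ is $CR$-hypoelliptic at $p_0$. This direction uses nothing beyond the definitions.

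For the converse, assume $M$ is $CR$-hypoelliptic at $p_0$. Using local embeddability I fix, after shrinking, a generic embedding $\phiup=(z_1,\dots,z_n):U\hookrightarrow\C^n$ and identify $M$ with its image, a generic $CR$ submanifold carrying the hypo-analytic structure defined by $Z=(z_1,\dots,z_n)$. Every $CR$ distribution $u$ has its $\cC^\infty$ wave front set $\wf(u)$ contained in the characteristic bundle $T^0M$ of conormals to the complex tangent space, while the holomorphic extension property is exactly the assertion that every $CR$ distribution is the trace on $M$ of an ambient holomorphic function, i.e.\ has empty hypo-analytic wave front set $\wf_{\mathrm{ha}}(u)$ over $p_0$. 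Since $\cC^\infty$-smoothness is the vanishing of $\wf(u)$, and extension the vanishing of the finer set $\wf_{\mathrm{ha}}(u)\supseteq\wf(u)$, the task is to upgrade $\cC^\infty$-regularity of \emph{all} $CR$ distributions to hypo-analytic regularity. The bridge is the Baouendi--Treves approximation formula: for $u$ smooth (granted by hypoellipticity) the Gaussian approximants $E_\tau(w)=c_\tau\int_M e^{-\tau[\,w-Z(q)\,]^2}\,u(q)\,\chi(q)\,d\mu(q)$ are entire in $w\in\C^n$ and converge to $u$ along $M$; their convergence on an ambient wedge with edge $M$ and prescribed normal direction is governed by the sign of $\re\{-\tau[\,w-Z(q)\,]^2\}$, hence precisely by $\wf_{\mathrm{ha}}(u)$. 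By the analytic-disc and Levi-form extension theory there is a convex cone $\Gamma(p_0)\subset N_{p_0}M$, the same for all $CR$ distributions since it is a geometric invariant of $M$ at $p_0$, consisting of the normal directions into which every $CR$ distribution extends holomorphically.

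It then remains to show that hypoellipticity forces $\Gamma(p_0)$ to meet every open half-space of $N_{p_0}M$. I argue by contraposition: if some characteristic codirection $\xi_0\in T^0_{p_0}M$ lay outside the polar of $\Gamma(p_0)$, I would construct a $CR$ distribution $u$ with $(p_0,\xi_0)\in\wf_{\mathrm{ha}}(u)$ that is genuinely singular at $p_0$ --- a boundary-value type solution of $\bar\partial_b$ produced from singular data on the non-extendable side --- contradicting $CR$-hypoellipticity. This manufacturing of a non-smooth $CR$ distribution out of a failure of one-sided extension is the main obstacle, and it is exactly where the hypothesis that \emph{all} $CR$ distributions be smooth is used in its full strength. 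Granting it, $CR$ functions extend to wedges covering all normal directions, and by the edge-of-the-wedge theorem these glue to a holomorphic $\tilde u$ on a full ambient neighborhood $V$ of $Z(p_0)$ with $\phiup^*\tilde u=u$; this is the holomorphic extension property. The robustness of the conclusion under changing or dropping genericity of the embedding, and under replacing distributions by smooth functions, then yields the equivalence of all the formulations mentioned above.
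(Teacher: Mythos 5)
Your first paragraph (extension $\Rightarrow$ hypoellipticity) is correct and matches the paper's observation that holomorphic functions are smooth. The converse, however, contains a genuine gap, and it sits exactly at the point where all the analytic work of the theorem lives. Your argument reduces everything to the claim that a failure of (directional) holomorphic extension can be converted into a $CR$ distribution that is \emph{not smooth} at $p_0$, and then you write ``Granting it, \dots''. That grant is the theorem. Nothing in your sketch constructs such a distribution: ``a boundary-value type solution of $\bar\partial_b$ produced from singular data on the non-extendable side'' is not a construction, because on the non-extendable side there is no ambient complex structure or solvability theory handed to you for free. The paper's proof is devoted precisely to this step: it first shows $CR$-hypoellipticity forces minimality at $p_0$ (otherwise a distribution carried by a proper $CR$ submanifold of the same $CR$ dimension, as in \cite{tr1,ba-ro}, is $CR$ and non-smooth); then Tumanov's theorem \cite{Tu1} gives a wedge $\cW$ to which all $CR$ distributions on a fixed neighborhood extend; then, assuming full extension fails, it passes to the envelope of holomorphy $\pi:X\to\C^n$ of $\cW$, identifies a Grauert--Remmert abstract boundary point $0'\in bX$ over $p_0$, and uses a quantitative estimate of Jarnicki--Pflug to build $f\in\cO^{(2n+1)}(X)$ (Lemma~\ref{lem21}), i.e.\ a holomorphic function with \emph{polynomial growth} at $bX$ which is unbounded along a sequence tending to $0'$. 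Polynomial growth is what guarantees that $f\circ\alpha$ has a distributional boundary value $f^*$ on the edge at all; unboundedness near $0'$, combined with \cite[Lemma 7.2.6]{BER}, shows $f^*$ is discontinuous, hence non-smooth, at $p_0$ --- the contradiction with hypoellipticity. Without an analogue of this construction your proof does not close.

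There are also secondary defects in the reduction itself. First, your cone logic is inverted: if $\xi_0$ lies \emph{outside} the polar of $\Gamma(p_0)$, then some direction of $\Gamma(p_0)$ pairs positively with $\xi_0$ and extension in that direction \emph{excludes} $\xi_0$ from the hypo-analytic wave front set of every $CR$ distribution, so no such $u$ can exist; the codirections you must kill are those \emph{inside} the polar. Second, producing $u$ with $(p_0,\xi_0)\in\wf_{\mathrm{ha}}(u)$ would not by itself contradict $CR$-hypoellipticity: hypoellipticity is a $\cC^\infty$ statement, and a smooth $CR$ function can perfectly well have nonempty hypo-analytic wave front set --- that is exactly the scenario the theorem must exclude, so you need $(p_0,\xi_0)\in\wf(u)$, the $\cC^\infty$ wave front set (this distinction is why the paper works with continuity of boundary values rather than with $\wf_{\mathrm{ha}}$). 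Third, the cone $\Gamma(p_0)$ of common extension directions is asserted to be ``a geometric invariant of $M$ at $p_0$''; this is not known --- the paper explicitly remarks that the known wedge-extension proofs give no information on the shape of the wedge, and the relation between common extension directions and wave front sets is stated there only as a conjecture, with Theorem~\ref{wf} as a partial result. Finally, your framework presupposes that some direction of extension exists, i.e.\ minimality, which must itself be deduced from hypoellipticity before Tumanov's theorem or any disc technique can be invoked.
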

An interesting consequence of Theorem\,\ref{main} is
a uniqueness result for the local $CR$-embedding of $M$ at $p_0$:
\begin{cor}\label{cor12}
Under the assumption of Theorem\,\ref{main} we have:
\begin{enumerate}
\item If $p_0\in{U}^{\text{open}}\subset{M}$ and 
$\phiup:U\to\C^{\nuup}$ is a local $CR$-embedding, then there is
an $n$-dimensional complex submanifold $X$ of an open neighborhood 
$V$ of $p_0$ in $\C^{\nuup}$ and $\omegaup$, with $p_0\in\omegaup^{\text{open}}
\subset{U}$ such that $\phiup(\omegaup)\subset{X}$. 
\item If $p_0\in{U}^{\text{open}}\subset{M}$ and 
$\phiup_i:U\to\C^n$, for $i=1,2$, are two generic local $CR$-embeddings with
$\phiup_i(p_0)=0$, then there are open neighborhoods $V,W$ of $0$ in
$\C^n$ and a biholomorphic map $\psiup:V\to{W}$ such that 
$\phiup_2=\psiup\circ\phiup_1$ on~$V$.
\end{enumerate}
\end{cor}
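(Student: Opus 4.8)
The plan is to deduce both statements from Theorem~\ref{main}. Since $M$ is $CR$-hypoelliptic at $p_0$ it has the holomorphic extension property there; and because $CR$-hypoellipticity is an intrinsic property of $M$ at $p_0$, the holomorphic extension conclusion is available for \emph{every} generic local $CR$-embedding, not only for a distinguished one. In both parts the mechanism is the same: regard the components of one embedding as smooth $CR$ functions, use the extension property to promote them to ambient holomorphic functions, and assemble these into a holomorphic graph (part (1)) or a holomorphic change of coordinates (part (2)).

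For part (1), I would first pass from $\phiup=(z_1,\dots,z_\nuup)$ to a generic sub-embedding. As recalled in the introduction, one may choose indices $i_1,\dots,i_n$ with $dz_{i_1}(p_0)\wedge\cdots\wedge dz_{i_n}(p_0)\neq0$; after permuting the coordinates of $\C^\nuup$ assume these are $z_1,\dots,z_n$, so that $\phiup'=(z_1,\dots,z_n)$ is a generic local $CR$-embedding on a smaller neighborhood $\omegaup$ of $p_0$. The remaining components $z_{n+1},\dots,z_\nuup$ are smooth $CR$ functions on $\omegaup$, so by the holomorphic extension property for $\phiup'$ there are holomorphic functions $\tilde z_{n+1},\dots,\tilde z_\nuup$ on a neighborhood $V'$ of $\phiup'(p_0)$ in $\C^n$ with $z_j=\tilde z_j\circ\phiup'$ on $\omegaup$ (after shrinking). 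The graph
\[
X=\{(w,\tilde z_{n+1}(w),\dots,\tilde z_\nuup(w)):w\in V'\}
\]
is then an $n$-dimensional complex submanifold of the neighborhood $V=V'\times\C^{\nuup-n}$ of $\phiup(p_0)$, and for $p\in\omegaup$ one has $\phiup(p)=(\phiup'(p),z_{n+1}(p),\dots,z_\nuup(p))=(\phiup'(p),\tilde z_{n+1}(\phiup'(p)),\dots,\tilde z_\nuup(\phiup'(p)))\in X$, so $\phiup(\omegaup)\subset X$ as required.

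For part (2), the components of $\phiup_2$ are smooth $CR$ functions on $U$; extending them holomorphically with respect to the generic embedding $\phiup_1$ yields a holomorphic map $\psiup$ on a neighborhood $V$ of $0$ in $\C^n$ with $\psiup\circ\phiup_1=\phiup_2$ near $p_0$ and $\psiup(0)=0$. Symmetrically, extending the components of $\phiup_1$ with respect to $\phiup_2$ gives a holomorphic map $\chiup$ near $0$ with $\chiup\circ\phiup_2=\phiup_1$ near $p_0$. It remains to see that $\psiup$ is biholomorphic near $0$, and this is where genericity enters. Writing $V_i=d\phiup_i(p_0)(T_{p_0}M)\subset\C^n$, genericity of $\phiup_i$ means precisely that $V_i$ spans $\C^n$ over $\C$. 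Differentiating the two relations at $p_0$ gives $d\psiup(0)\,d\phiup_1(p_0)=d\phiup_2(p_0)$ and $d\chiup(0)\,d\phiup_2(p_0)=d\phiup_1(p_0)$, whence the $\C$-linear endomorphism $d\chiup(0)\,d\psiup(0)$ of $\C^n$ restricts to the identity on $V_1$; since $V_1$ generates $\C^n$ over $\C$, it equals the identity on all of $\C^n$. Thus $d\psiup(0)$ is invertible, and by the holomorphic inverse function theorem $\psiup$ restricts to a biholomorphism of a neighborhood $V$ of $0$ onto $W=\psiup(V)$, still satisfying $\phiup_2=\psiup\circ\phiup_1$ near $p_0$.

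I expect the only genuine subtlety to be the passage asserted at the outset: justifying that the holomorphic extension property, stated in Theorem~\ref{main} for one distinguished generic embedding, may be applied to the particular embeddings $\phiup'$, $\phiup_1$, $\phiup_2$ arising here. This rests on the fact that $CR$-hypoellipticity is a property of $M$ at $p_0$ independent of any embedding, so that once it holds the extension conclusion transfers to the image of an arbitrary generic embedding. The remaining ingredients — selecting a generic frame of components, forming the graph, and the infinitesimal genericity argument for the invertibility of $d\psiup(0)$ — are then routine.
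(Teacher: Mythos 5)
Your proposal is correct and is essentially the paper's own argument: the transfer step you flag as the only subtlety --- that $CR$-hypoellipticity is intrinsic, so the conclusion of Theorem~\ref{main} is available for \emph{every} generic local $CR$-embedding --- is precisely what the paper records as Theorem~\ref{lem31} of \S\ref{sec4} (its proof just re-runs Theorem~\ref{main} for the given embedding, making $\phiup^*:\cO_{\C^n,(p_0)}\to\cO^a_{M,(p_0)}$ an isomorphism), after which your graph construction in (1) and the invertibility argument for $d\psiup(0)$ in (2) are the routine remainder. One caveat: your claim that genericity of $\phiup_i$ ``means precisely'' that $V_i=d\phiup_i(p_0)(T_{p_0}M)$ spans $\C^n$ over $\C$ is not the paper's literal dimension-count definition ($\nuup=m+d$) but the geometric reading implicit in the normal form \eqref{graph}, and it is the reading the corollary needs: when $d\geq 2$ one can compose a genuine generic embedding with a singular $\C$-linear map that is injective on $V_1$, producing a map that is still ``generic'' by dimension count alone yet violates conclusion (2), so this interpretation should be stated as a standing convention rather than treated as tautological.
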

This corollary has the consequence that, when $M$ is
$CR$-hypoelliptic and locally embeddable
at all points, its $CR$ structure completely determines its
hypo-analytic structure (see \cite{t92}). Moreover, the arguments of
\cite{AF79} also yield
\begin{cor}\label{cor14} Let $M$ be a $CR$ manifold of $CR$ dimension $m$ and 
$CR$ codimension $d$, and $n\! =\! m\! + \! d$. 
Assume that $M$ is locally $CR$-embeddable and  $CR$-hypoelliptic at
all points. Then $M$ admits a smooth generic $CR$-embedding
$M\hookrightarrow{X}$ into an $n$-dimensiona complex manifold $X$.
\par
If $\phiup_i:M\to{X}_i$, $i=1,2$, are two generic $CR$-embeddings of $M$, 
then there are tubular neighborhoods $Y_i$ of $\phiup_i(M)$ in $X_i$, $i=1,2$,
and a biholomorphic map $\psiup:Y_1\to{Y}_2$, such that $\psiup(\phiup_1(M))=
\phiup_2(M)$. 
\end{cor}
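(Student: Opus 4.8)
The plan is to construct $X$ by gluing the local models supplied by Corollary~\ref{cor12}, following the scheme of Andreotti--Fredricks \cite{AF79}; both the existence and the uniqueness assertions reduce to patching germs of biholomorphisms along $M$. The crucial mechanism I would exploit is that a \emph{generic} $CR$-embedding pins down the ambient holomorphic structure uniquely: if $N\subset\C^n$ is a generic $CR$ submanifold through a point $q$, then $T_qN+J\,T_qN=T_q\C^n$, so $N$ is generating and the only holomorphic germ vanishing on $N$ at $q$ is $0$; consequently a holomorphic map is determined by its restriction to $N$.

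For the existence part, I would first cover $M$ by open sets $U_\alpha$ carrying generic local $CR$-embeddings $\phiup_\alpha:U_\alpha\hookrightarrow\C^n$, with images the generic $CR$ submanifolds $M_\alpha=\phiup_\alpha(U_\alpha)$ of open sets $V_\alpha\subset\C^n$; such embeddings exist because $M$ is locally $CR$-embeddable and, as observed in the Introduction, any local embedding can be reduced to a generic one. On each overlap $U_\alpha\cap U_\beta$ and at each point $p$ there, Corollary~\ref{cor12}(2) (after an affine translation matching the base points) furnishes a germ of biholomorphism carrying $\phiup_\alpha$ to $\phiup_\beta$ near $p$. By the uniqueness principle above these germs agree on their common domains, so they patch to a biholomorphism $\psiup_{\beta\alpha}$ defined on a neighborhood of $M_\alpha$ over $U_\alpha\cap U_\beta$, with $\phiup_\beta=\psiup_{\beta\alpha}\circ\phiup_\alpha$ there; the same uniqueness forces the cocycle relation $\psiup_{\gamma\alpha}=\psiup_{\gamma\beta}\circ\psiup_{\beta\alpha}$ on triple overlaps, since both sides send $\phiup_\alpha$ to $\phiup_\gamma$.

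Gluing the $V_\alpha$ by the cocycle $\{\psiup_{\beta\alpha}\}$ then produces a (possibly non-Hausdorff) complex manifold of dimension $n$ into which the $\phiup_\alpha$ assemble to a global generic $CR$-embedding $\phiup:M\hookrightarrow X$. The main obstacle I expect here is precisely Hausdorffness of the glued space: transition maps defined only near $M$ can create non-separable points away from $M$. As in \cite{AF79} this is resolved by passing to a locally finite refinement and shrinking the $V_\alpha$ to a tubular neighborhood of $\bigcup_\alpha M_\alpha$; since $\phiup$ is a genuine topological embedding and the charts are compatible along $M$, any two non-separable limit points must lie over a common point of $M$ and therefore coincide, which yields the Hausdorff complex manifold $X$ and proves the first assertion.

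For the uniqueness statement I would run the same patching, now for germs of biholomorphisms between the two ambient structures along $M$. For each $p\in M$, Corollary~\ref{cor12}(2) applied to $\phiup_1,\phiup_2$ in suitable charts gives a germ $\psiup^p$ with $\phiup_2=\psiup^p\circ\phiup_1$ near $p$; by genericity these germs are unique, hence they glue to a biholomorphism from a neighborhood of $\phiup_1(M)$ in $X_1$ onto a neighborhood of $\phiup_2(M)$ in $X_2$ satisfying $\psiup\circ\phiup_1=\phiup_2$. After the same shrinking to tubular neighborhoods $Y_i$ of $\phiup_i(M)$, one obtains a biholomorphism $\psiup:Y_1\to Y_2$ with $\psiup(\phiup_1(M))=\phiup_2(M)$, completing the proof.
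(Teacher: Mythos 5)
Your proposal is correct and follows essentially the same route as the paper: the paper obtains this corollary precisely by combining the local uniqueness statement of Corollary~\ref{cor12} with the gluing arguments of \cite{AF79}, which is exactly your scheme of patching the local generic models via unique transition biholomorphisms (forced by genericity) and then shrinking to tubular neighborhoods to secure Hausdorffness and injectivity.
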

Since holomorphic functions are real-analytic,
holomorphic extendability trivially 
implies $CR$-hypoellipticity. 
Both $CR$-hypoellipticity and holomorphic extendability
imply minimality.
The main result of this note 
is that {$CR$-}hypoellipticity and holomorphic 
extendability  are equivalent at minimal points. \par
Since real-analytic $CR$ manifolds are locally $CR$-embeddable 
(see \cite{AF79}), and holomorphic functions are real-analytic,
we obtain
\begin{cor}\label{cor15}
Assume that $M$ is a real-analytic $CR$-manifold, and let $p_0\in{M}$. Then
the following are equivalent:
\begin{enumerate}
\item $M$ is $CR$-hypoelliptic at $p_0$;
\item  $M$ is $CR$-analytic-hypoelliptic at $p_0$;
\item all smooth solutions of the homogeneous tangential Cauchy-Riemann
equations on a neighborhood of $p_0$ are real-analytic at $p_0$.
\end{enumerate}
\end{cor}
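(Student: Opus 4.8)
The plan is to prove the three statements equivalent by running the cycle $(1)\Rightarrow(2)\Rightarrow(3)\Rightarrow(1)$, feeding Theorem~\ref{main} with the extra input that, by \cite{AF79}, a real-analytic $CR$ manifold carries a \emph{real-analytic} generic local $CR$-embedding $\phiup_0:U\hookrightarrow\C^n$ near $p_0$ (first embed real-analytically, then pass to a real-analytic generic subfamily as in the introduction). Two of the links are immediate and only the remaining two carry content. In particular, $(2)\Rightarrow(3)$ is trivial, since every smooth $CR$-solution is a fortiori a distribution $CR$-solution, so that real-analyticity of all distribution solutions forces it for the smooth ones; for the same reason $(2)\Rightarrow(1)$ holds on the nose, real-analytic functions being smooth. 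Thus the genuine work is in $(1)\Rightarrow(2)$ and $(3)\Rightarrow(1)$.

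For $(1)\Rightarrow(2)$ I would proceed as follows. Assuming $CR$-hypoellipticity at $p_0$, Theorem~\ref{main} supplies the holomorphic extension property through \emph{some} generic embedding $\phiup$. Since $M$ is then hypoelliptic and locally embeddable, Corollary~\ref{cor12}(2) applies and shows that $\phiup$ and the real-analytic $\phiup_0$ differ by a biholomorphism $\psiup$ of ambient neighborhoods; writing $\phiup=\psiup\circ\phiup_0$ and $\phiup^*\tilde u=u$ gives $u=\phiup_0^*(\tilde u\circ\psiup)$ with $\tilde u\circ\psiup$ holomorphic, so the extension property passes to $\phiup_0$. Hence every distribution $CR$-solution $u$ near $p_0$ satisfies $u=\phiup_0^*\tilde u$ for a holomorphic $\tilde u$ defined near $\phiup_0(p_0)$. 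As $\phiup_0$ is real-analytic and $\tilde u$, being holomorphic, is real-analytic, the pullback $\tilde u\circ\phiup_0=u$ is real-analytic near $p_0$; this is precisely $CR$-analytic-hypoellipticity, i.e. $(2)$.

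The delicate link is $(3)\Rightarrow(1)$. Starting from the hypothesis that every smooth $CR$-solution is real-analytic at $p_0$, I would invoke the classical fact that a real-analytic $CR$ function on a real-analytic generic submanifold of $\C^n$ extends holomorphically to an ambient neighborhood (by complexifying the defining equations and the function and eliminating the antiholomorphic dependence). Read through $\phiup_0$, this says that every smooth $CR$-function extends holomorphically, i.e. that $M$ satisfies the weaker formulation of the holomorphic extension property in which extension is required only of smooth $CR$-functions. Since, as recorded in the introduction before Theorem~\ref{main}, this weak formulation is equivalent to the full (distributional) holomorphic extension property, Theorem~\ref{main} returns $CR$-hypoellipticity at $p_0$, closing the cycle.

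I expect the main obstacle to sit squarely in $(3)\Rightarrow(1)$. First, one must verify that the classical holomorphic-extension statement for real-analytic $CR$ functions applies verbatim to the present generic real-analytic embedding, and that ``real-analytic \emph{at} $p_0$'' is genuinely used on a full neighborhood, as the extension requires. Second, and more seriously, one must make sure that the reduction of the smooth-function formulation to the full holomorphic extension property---where the real content of Theorem~\ref{main} is consumed---is invoked without circularity, so that nothing belonging to the very equivalence being proved is assumed along the way.
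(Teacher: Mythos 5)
Your proof is correct and assembles exactly the ingredients the paper's (unwritten) proof relies on: the Andreotti--Fredricks real-analytic generic embedding, Theorem~\ref{main}, the classical complexification/extension fact for real-analytic $CR$ functions, and the \S\ref{sec4} equivalence of smooth, continuous and distributional holomorphic extension---and the circularity you worry about is not present, since the \S\ref{sec4} theorems are proved independently of Corollary~\ref{cor15}. Your only deviation is cosmetic: you transfer the extension property to the real-analytic embedding via Corollary~\ref{cor12}(2), where the paper would invoke Theorem~\ref{lem31}.
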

We also point out that our result applies to give concrete applications for
the Siegel-type theorems proved in \cite{NH04a,NH05a}
about the trancsendence degree of the fields of $CR$-meromorphic functions.\par
Despite of several contributions, the problem of finding 
a geometric characterization for the holomorphic extension property
is still wide open, even for real analytic hypersurfaces. 
The interest of Theorem \ref{main} is that it establishes a
link between holomorphic extension and $\cC^{\infty}$ regularity, 
a central and better understood topic in PDE theory.
We illustrate this point of view by recalling in \S\ref{sec6}
the weak pseudoconcavity assumptions of \cite{AHNP1},
generalizing the \textit{essential
pseudoconcavity} of \cite{HN00},
which insure $CR$-hypoellipticity, and illustrating 
by some examples 
in \S\ref{sec7} how this approach leads to the proof of the
holomorphic extension property for manifolds with a highly 
degenerate Levi form. Extension theorems had been obtained
before under stronger non-degeneracy assumptions on the Levi form
(see e.g. \cite{BP82, naci-va}), or for $CR$ manifolds satisfying
a third order pseudoconcavity condition (see \cite{AHNP2}).\par
We notice that minimality is a necessary condition for 
$CR$-hypoellipticity by \cite{ba-ro}, and that some sort of
pseudoconcavity is also necessary, as holomorphic extension
does not hold e.g. when $M$ lies in the boundary of a
domain of holomorphy. 
\par
In general, germs of $CR$ functions on a generically embedded $CR$ manifold
$M\hookrightarrow{X}$ may fail to holomorphically extend to a full neighborhood
$\cU$ of $p_0$ 
in $X$ and one can consider instead open subsets
$\cW$ of $X$ for which $M\cap\partial\cW$ is a neighborhood of $p_0$ in $M$.
A fundamental 
result of Tumanov \cite{Tu1} states that holomorphic local 
\emph{wedge} extension
is valid if $M$ is minimal at $p_0$. By \cite{ba-ro}, 
this condition is also necessary. However,
 the known proofs of local holomorphic wedge extension merely yield existence, 
but no explicit information on its shape.
The analytic or hypo-analytic wave front sets tautologically
give the directions of holomorphic extension. We conjecture that,
in analogy with Theorem\,\ref{main}, the union  of the
$\mathcal{C}^{\infty}$ wave front sets of all germs of $CR$ distributions
and that of their hypo-analytic wave front sets coincide. 
Theorem\,\ref{wf} in \S\ref{sec5} is a first partial 
result in this direction.
\par\smallskip
Let us shortly describe the contents of the paper. In \S\ref{sec2} 
we set notation and precise the notion of $CR$-hypoellipticity. \S\ref{sec3}
contains the proof of Theorem\,\ref{main}. In \S\ref{sec4} we prove various
equivalences of the extension property, easily implying Corollaries\ref{cor12},
\ref{cor14},\ref{cor15}. Section \S\ref{sec5} contains our result
about wedge extension and the common $\cC^\infty$ wave front set of germs
of $CR$ distributions. In \S\ref{sec6} we rehearse the subellipticity result
of \cite{AHNP1} and in \S\ref{sec7} we give some examples.



\section{$CR$-hypoellipticity} \label{sec2}
Let
$M$ be an abstract smooth $CR$ manifold
of $CR$ dimension $m$ and $CR$ codimension $d$. The $CR$ structure on $M$
is defined by the datum of an $m$-dimensional subbundle $T^{0,1}M$ of
the complexified tangent bundle $\mathbb{C}TM$ with
\begin{equation*}
  T^{0,1}M\cap\overline{T^{0,1}M}=\underline{0}\quad\text{and}\quad
[\Gamma(M,T^{0,1}M),\Gamma(M,T^{0,1}M)]\subset\Gamma(M,T^{0,1}M).
\end{equation*}
\par
For $U^{\text{open}}\subset{M}$ we denote by $\cO_M^{\infty}(U)$ the set
of smooth solutions on $U$ to the tangential Cauchy-Riemann equations:
\begin{equation*}
  \cO_M^{\infty}(U)=\{u\in\cC^\infty(U,\mathbb{C})\mid Zu=0,\;
\forall Z\in\Gamma(U,T^{0,1}M)\}.
\end{equation*}
Likewise, we denote by $\cO_M^{0}(U)$ and 
$\cO_M^{-\infty}(U)$ the spaces of complex valued continuous functions and 
of complex valued distributions, respectively, 
that weakly solve the homogeneous equations
\begin{equation*}
 \text{$Zu=0,\;\forall Z\in\Gamma(U,T^{0,1}M)$ on $U$ },
\end{equation*}
i.e. such that
\begin{equation*}
  \int u \,Z'\phi \;d\mu = 0,\quad\forall \phi\in\mathcal{C}^{\infty}_0(U),
\;\forall Z\in\Gamma(U,T^{0,1}M),
\end{equation*}
where $\mu$ is a positive measure with smooth density on $M$ and 
the formal adjoint $Z'$ of $Z\in\Gamma(U,T^{0,1}M)$ is defined by
\begin{equation*}
  \int Zv\,{\phi}\; d\mu=\int v\, {Z'\phi}\; d\mu,\quad
\forall v,\phi\in \mathcal{C}^{\infty}_0(U).
\end{equation*}
The assignments $U^{\text{open}}\to \cO_M^a(U)$, for $a=-\infty, 0, \infty$,
define sheaves of germs. We denote by 
$\cO_{M,(p_0)}^{a}$ the stalk at $p_0\in{M}$. When $M$ is a complex manifold
we drop the superscript $a$, because the three sheaves coincide by
the regularity theorem for holomorphic functions.
\begin{defn}
We say that $M$ is \emph{$CR$-hypoelliptic} at $p_0\in{M}$ if
$\cO_{M,(p_0)}^{-\infty}=\cO_{M,(p_0)}^{\infty}$.  
\end{defn}
\section{Proof of Theorem\,\ref{main}}\label{sec3}
By taking a generic $CR$-embedding, we can as well assume
that $M\subset\C^n$, where $n=m+d$, and
$m$ is the $CR$ dimension, $d$ the 
$CR$ codimension of $M$. We can also assume that
$p_0=0$ and that the holomorphic coordinates of $\mathbb{C}^n$ have
been chosen in such a way that $M$ is the graph 
\begin{equation}\label{graph}
y'=h(x',z'')
\end{equation}
of a smooth map $h:V\to{\mathbb{R}}^d$, with $h(0)=0$, $dh(0)=0$, 
for an open neighborhood
$V$ of $0$ in $\mathbb{R}^d\times\mathbb{C}^m$. Here
$z=(z',z'')\in\mathbb{C}^d\times\mathbb{C}^m$, with $d+m=n$,
and $z'=x'+iy'$, $z''=x''+iy''$ with $x',y'\in\mathbb{R}^d$,
$x'',y''\in\mathbb{R}^m$.\par
 An \textit{open wedge}
$\cW$ attached to $M$ along
an open set $E=\mbox{Edge}(\cW)\subset M$ is, in the chosen coordinates, 
a set of the form
\begin{equation}\label{wedge}
\cW=\{z+(i{x}',0):z\in E, {x}'\in \mathrm{C}\},
\end{equation}
where $\mathrm{C}\subset\R^d$ is a truncated open cone with vertex at the origin. 
Note that $\cW$ is foliated
by the approach manifolds $E_{y'}=\{z+(iy',0):z\in E\}$, $y'\in \mathrm{C}$. 
Recall that $f\in\cO(\cW)$ attains the weak boundary
values $f^*\in\cD'(E)$ along $E$ if for every test function $\phi\in\cD(E)$, we have
\begin{equation}
\lim_{h'\rightarrow 0,y'\in 
\mathrm{C}}\int f(x'+ih(x',z'')+iy',z'')\phi(x',z'')\,dm_{d+2m}= f^*[\phi].
\end{equation}
Here $dm_{d+2m}$ denotes standard Lebesgue measure on $\R^d\times\C^m$. 
A function $f\in\cO(\cW)$ has polynomial
growth along $E$ if for every compact $K\subset E$ there are an integer 
$N_K\geq 0$ and a constant $a_K>0$ such that
\begin{equation}
|f(x'+ih(x',z'')+iy',z'')|\leq a_K |y'|^{-N_K},\;
\forall (x',z'')\in K,\;\forall y'\in \mathrm{C}.
\end{equation}
Holomorphic functions of polynomial 
growth attain unique distribution boundary values on $E$,
which 
weakly satisfy the homogeneous
tangential $CR$ equations.
\begin{proof}[Proof of Theorem \ref{main}:]
Before going into the technical details of the proof, we
sketch the main ideas involved. 
As already mentioned,
we need only to show that $CR$-hypoellipticity
implies holomorphic extension 
to full neighborhoods.
First we observe that $p_0$ must be 
a minimal point of $M$. Otherwise, $M$ contains a proper
$CR$ submanifold $N$ through $p_0$, of the same $CR$ dimension.
Then a suitable distribution carried by 
$N$ would define a
non smooth $CR$-distribution on a neighborhood of $p_0$
(see \cite{tr1,ba-ro}). Thus $CR$-hypoellipticity implies minimality at
$p_0$. Hence, all 
$CR$ distributions on a neighborhood $U\subset M$ of $p_0$
are boundary values of holomorphic functions defined on
an open wedge $\cW=\cW_U$. Then we argue by contradiction,
assuming that not all $CR$ distributions 
holomorphically extend to a full neighborhood of $p_0$. 
We consider the envelope of holomorphy $X$ of $\cW$,
and identify $p_0$ to a point of its abstract boundary $bX$.
Then we construct
a holomorphic function $f$ on $X$ 
with polynomial growth on $bM$, and 
whose modulus is unbounded in any neighborhood of $p_0$.
Pushing down to $\cW$, we obtain a function 
with polynomial growth along the edge 
with a $CR$-distribution boundary value which is unbounded, and hence
discontinuous, at $p_0$.
\par
Let us choose holomorphic coordinates $(z',z'')$ centered at $p_0$ as in (\ref{graph}),
and let $U\subset M$ be an open neighborhood of $0$ which carries a $CR$ distribution
which does not holomorphically extend to an ambient neighborhood of $0$.
Since $M$ is minimal at $0$ as noticed above, Tumanov's theorem 
yields an open wedge $\cW$ as in
(\ref{wedge}) such that every $CR$ distribution on $U$ has a holomorphic extension
to $\cW$.
\par
Let $\pi:X\rightarrow\C^n$ be the envelope of holomorphy of $\cW$. 
Recall that $X$ is a Stein manifold spread
over $\C^n$ by a locally biholomorphic mapping $\pi$. 
Moreover there is a canonical injective holomorphic map
$\alpha:\cW\rightarrow X$ satisfying $\pi\circ\alpha=\mbox{id}_{\cW}$ 
such that for every $g\in\cO(\cW)$
the pushforward  $\alpha_* g$ to $\cW'=\alpha(\cW)$ extends to $X$ 
holomorphically, and such that $X$ is
a maximal Riemann domain with this property (see \cite{ja-pf,me-po1} 
for detailed information).\par
We recall the construction, due to Grauert and Remmert, 
which yields a canonical abstract closure
$\overline{\pi}:\overline{X}\rightarrow\C^n$ in the following way: 
A boundary point is
a maximal filter\footnote{A filter 
is a family of subsets such that for each pair of members 
$U_1,U_2$, there
is a third member $U_3$ with $U_3\subset U_1\cap U_2$.}
$\mathfrak{a}$ of connected open sets in $X$ such that 
\begin{enumerate}
\item[{\bf (i)}] $\mathfrak{a}$ 
has no accumulation point in $X$,
\item[{\bf (ii)}] for
every $U\in\mathfrak{a}$, there is 
$V^{\text{open}}\subset\C^n$ such that $U$ is a
connected component of $\pi^{-1}(V)$, 
\item[{\bf (iii)}] the image filter $\pi_* \mathfrak{a}$ converges 
to a point $z\in\C^n$, and
\item[{\bf (iv)}] 
for every open neighborhood $V\subset\C^n$
of $z$ one of the components of $\pi^{-1}(V)$ is a member 
of $\mathfrak{a}$. 
\end{enumerate}
We will denote the abstract
boundary of $X$ by $b X$.
Setting $\overline{\pi}(\mathfrak{a})=z$ 
in the above situation, one obtains
an extension of $\pi$ to $\overline{X}=X\cup bX$, and
there is a natural Hausdorff topology on
$\overline{X}$ such that $\overline{\pi}$ is continuous 
(see \cite{ja-pf} for the details). Note that the
topological boundary $\partial D$ of a domain $D\subset\C^n$ 
may not coincide with its abstract boundary $bD$.
\par
Our assumption that holomorphic extension to a full neighborhood of 
$0$ fails implies that the abstract boundary $bX$ contains a point
$0'$ with $\overline{\pi}(0')=0$. 
We denote by $\delta_X(p)$ the \textit{distance from the boundary}
in $X$. It can be defined by
\begin{equation*}
  \delta_X(p)=\sup\{r>0\mid \{|z-\pi(p)|<r\}\subset\pi(X)\}.
\end{equation*}
For each integer 
$k\geq 0$, we define the space of 
holomorphic functions on $X$, with $k$-polynomial growth on $bX$, by
\begin{equation*}
\cO^{(k)}(X)=\{f\in\cO(X)\mid \delta_X^k f\;\text{is bounded on $X$}\}.
\end{equation*}
It is a Banach spaces with the norm 
$\|f\|_{\cO^{(k)}(X)}=\sup_{p\in X}|\delta_X^k(p)f(p)|$.
\begin{lem}\label{lem21}
There is a sequence $\{p_j\}_{j=1,2,\ldots}\in\cW'=\alpha(\cW)$, 
satisfying $\pi(p_j)\rightarrow 0$, and
a function $f\in\cO^{(2n+1)}(X)$ such that $|f(p_j)|\rightarrow\infty$.
\end{lem}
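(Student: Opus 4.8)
The plan is to obtain $f$ from the uniform boundedness principle applied to the Banach space $\cO^{(2n+1)}(X)$. First I would fix a sequence $\{p_j\}\subset\cW'$ converging to the abstract boundary point $0'$ in $\overline{X}$: since $\overline{\pi}(0')=0$ and $0$ lies in the edge of the wedge, every member of the filter $0'$ (a connected component of some $\pi^{-1}(V)$, $V\ni 0$) meets $\cW'=\alpha(\cW)$, so such $p_j$ exist, and they automatically satisfy $\pi(p_j)\to 0$ and $\delta_X(p_j)\to 0$. On $\cO^{(2n+1)}(X)$ I would consider the evaluation functionals $\Lambda_j(g)=g(p_j)$, which are bounded with $|\Lambda_j(g)|\le\|g\|_{\cO^{(2n+1)}(X)}\,\delta_X(p_j)^{-(2n+1)}$. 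If every $g$ had $\{g(p_j)\}_j$ bounded, Banach--Steinhaus would force $\sup_j\|\Lambda_j\|<\infty$; hence it suffices to prove $\|\Lambda_j\|\to\infty$, for then some $f\in\cO^{(2n+1)}(X)$ has $\limsup_j|f(p_j)|=\infty$, and passing to a subsequence yields $|f(p_j)|\to\infty$.

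The core is a lower bound for $\|\Lambda_j\|$, produced by local peak functions. Since $X$ is Stein it is pseudoconvex, so $-\log\delta_X$ is plurisubharmonic. Fixing $j$, I would take a cut-off $\chi_j$ equal to $1$ near $p_j$ and supported in the lift $\tilde B_j\ni p_j$ of the ball $B(\pi(p_j),\delta_X(p_j))\subset\C^n$, so that $|\bar\partial\chi_j|\lesssim\delta_X(p_j)^{-1}$ on a set where $\delta_X\sim\delta_X(p_j)$ and $|\pi-\pi(p_j)|\sim\delta_X(p_j)$. Then I would solve $\bar\partial u_j=\bar\partial\chi_j$ by H\"ormander's $L^2$-estimate on $X$, using the plurisubharmonic weight
\[
\varphi_j=2(n+1)\bigl(-\log\delta_X\bigr)+2\gamma\,\log|\pi-\pi(p_j)|+|\pi|^2,\qquad n<\gamma<n+1,
\]
in which $|\pi|^2$ supplies the strict positivity needed in the estimate, while the logarithmic pole makes $e^{-\varphi_j}$ non-integrable at $p_j$ and therefore forces $u_j(p_j)=0$. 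The function $f_j=\chi_j-u_j$ is then holomorphic on $X$ with $f_j(p_j)=1$.

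It remains to show $\|f_j\|_{\cO^{(2n+1)}(X)}\to 0$, whence $\|\Lambda_j\|\ge\|f_j\|_{\cO^{(2n+1)}(X)}^{-1}\to\infty$ and the lemma follows. A sub-mean-value estimate over balls of radius $\sim\delta_X(p)/2$ shows that $\{g:\int_X|g|^2\delta_X^{2(n+1)}\,dV<\infty\}$ embeds continuously into $\cO^{(2n+1)}(X)$, the factor $\delta_X(p)^{2n}$ from the ball volume accounting for the shift from $n+1$ to $2n+1$; so it is enough to bound unweighted $L^2$-norms. For the cut-off, $\int_X|\chi_j|^2\delta_X^{2(n+1)}\,dV\lesssim\delta_X(p_j)^{4n+2}\to 0$. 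For the correction, the choice $n<\gamma<n+1$ makes the right-hand side of H\"ormander's inequality $\lesssim\delta_X(p_j)^{4n-2\gamma}$, a strictly positive power of $\delta_X(p_j)$; and since (after a harmless rescaling so that $|\pi-\pi(p_j)|\le 1$ on the relevant region) the pole weight dominates $1$, this pole-weighted bound controls the unweighted integral $\int_X|u_j|^2\delta_X^{2(n+1)}\,dV$, which therefore tends to $0$ as well. The main obstacle is exactly this quantitative peak construction: the exponent $\gamma$ must be tuned to be simultaneously strong enough to force vanishing at $p_j$ ($\gamma>n$) and weak enough to keep the global solution norm small ($\gamma<n+1$), and this must be matched against the sub-mean-value embedding into $\cO^{(2n+1)}(X)$; the subtler geometric point is verifying that the members of the filter $0'$ genuinely meet $\cW'$, so that the $p_j$ may be chosen in $\cW'$ while converging to $0'$.
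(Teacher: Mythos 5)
Your proposal is correct in substance but takes a genuinely different route from the paper. The paper outsources the quantitative input to the literature: it cites Jarnicki--Pflug (their Proposition 2.5.4), giving a constant $C>0$ such that every $p\in X$ admits $f_p\in\cO^{(2n+1)}(X)$ with $f_p(p)=1$ and $\|f_p\|_{\cO^{(2n+1)}(X)}\leq C\delta_X(p)$, and then builds the single blowing-up function by an explicit gliding-hump induction: points $p_j\in\alpha(\cW\cap B_0(1/j))$ with $\delta_X(p_j)$ decreasing extremely fast, and corrections (unimodular multiples of $f_{p_k}$, scaled by $k$) that are small both in the $\cO^{(2n+1)}$-norm and uniformly on the region of $X$ where the earlier points sit, so the limit still satisfies $|f(p_j)|\geq j-1$. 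You do two things differently: (i) you reprove the needed peak-function estimate from scratch by H\"ormander's $L^2$-method, and your bookkeeping checks out --- with $n<\gamma<n+1$ the pole forces $u_j(p_j)=0$, the data term is $\lesssim\delta_X(p_j)^{4n-2\gamma}$, and the sub-mean-value embedding ($(n+1)+n=2n+1$) gives $\|f_j\|_{\cO^{(2n+1)}(X)}\lesssim\delta_X(p_j)^{2n-\gamma}\to 0$, weaker than the cited bound $C\delta_X(p_j)$ but amply sufficient; (ii) you replace the induction by Banach--Steinhaus applied to the evaluation functionals $\Lambda_j$, which is legitimate since $\cO^{(2n+1)}(X)$ is a Banach space and $\|\Lambda_j\|\geq\|f_j\|^{-1}_{\cO^{(2n+1)}(X)}\to\infty$; passing to a subsequence upgrades $\limsup_j|f(p_j)|=\infty$ to $|f(p_j)|\to\infty$. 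In effect your uniform-boundedness step is the abstract form of the paper's gliding hump (which is the constructive proof of that principle), and your $\bar\partial$-construction makes the argument self-contained where the paper quotes a reference; the paper's version, in exchange, is elementary once the citation is granted and produces $f$ with explicit lower bounds.

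Two points in your write-up need shoring up. First, the comparison $e^{-\varphi_j}\gtrsim\delta_X^{2(n+1)}$ on all of $X$ (your ``harmless rescaling'') requires $\pi(X)$ to be bounded; otherwise $e^{-|\pi|^2}$ and the absence of a uniform bound on $|\pi-\pi(p_j)|$ destroy the lower bound. This is true --- the coordinates are bounded on the truncated wedge $\cW$, and extension to the envelope of holomorphy preserves sup-norms, so $\pi(X)$ is bounded --- but it must be stated and used. Second, the existence of $p_j\in\cW'$ with $\pi(p_j)\to 0$ \emph{and} $\delta_X(p_j)\to 0$ is essential (without $\delta_X(p_j)\to 0$ the norms $\|\Lambda_j\|$ need not blow up), and your appeal to ``filter members of $0'$ meet $\cW'$'' is only automatic for the boundary point constructed from the wedge itself. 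A clean justification avoiding the abstract boundary altogether: if $q_k\in\cW$, $q_k\to 0$ but $\delta_X(\alpha(q_k))\geq\epsilon$, then the lifted $\epsilon$-balls through $\alpha(q_k)$ all agree with $\alpha$ on a common small subwedge, hence coincide as sections, yielding a single-sheeted extension of every element of $\cO(\cW)$ to a fixed neighborhood of $0$, contradicting the standing assumption that extension at $0$ fails. The paper leaves this very same point implicit (it silently chooses $p_k\in\alpha(\cW\cap B_0(1/k))$ with $\delta_X(p_k)$ arbitrarily small), so you are on equal footing with it here, but a complete proof should include the argument.
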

For subdomains of $\C^n$, more 
precise results can be found in \cite{Pf}.
\begin{proof}
We will use the following result, which is a particular case of 
\cite[Proposition 2.5.4]{ja-pf}:\qquad 
\textsl{There is a constant $C>0$, only depending on $X$, such that
  \begin{equation*}
    \forall p\in{X}\;\;\exists f_p\in\mathcal{O}^{(2n+1)}(X)\;\;\text{with}\;\;
f(p)=1,\;\; \|f\|_{\mathcal{O}^{(2n+1)}(X)}\leq C\delta_X(p).
  \end{equation*}}
We will prove by induction that there are points $p_j\in X$ 
and functions $f_j\in\cO^{(2n+1)}(X)$,
$j=1,2,\ldots$, satisfying
\begin{itemize}
  \item[\bf (a)] $p_j\in\alpha(\cW\cap B_0(1/j))$,
  \item[\bf (b)] $ |f_j(p_j)|\geq j$,
  \item[\bf (c)] $\|f_j-f_{j-1}\|_{\cO^{(2n+1)}(X)}<2^{-j}$, and
  \item[\bf (d)] $\sup_{X_{\leq\delta_{j-1}}}|f_j-f_{j-1}|\leq 2^{-j}$,
\end{itemize}
where we have abbreviated $\delta_j=\delta_X(p_j)$, $X_{\leq d}
=\{p\in X:\delta_X(p)\leq d\}$.
\par
Take any $p_1\in\alpha(\cW\cap B_0(1))$ and set $f_1\equiv 1$. 
Assume by recurrence that we already found
$p_1,\ldots,p_{k-1}$ and $f_1,\ldots,f_{k-1}\in\cO^{(2n+1)}(X)$ 
satisfying 
{\bf (a)}-{\bf (d)} for $j\leq k-1$.
Choose $p_k\in\alpha(\cW\cap B_0(1/k))$ such that 
$\delta_k\leq\frac{\delta^{2n+1}_{k-1}}{k 2^k C}$.
If $|f_{k-1}(p_k)|\geq k$ holds, $f_k=f_{k-1}$ 
obviously satisfies {\bf (a)}-{\bf (d)} for $j=k$.
Otherwise, we pick a function $f_{p_k}$ as in the above-cited result and set
$f_k=f_{k-1}+k \alpha f_{p_k}$, with $\alpha=1$ if $f_{k-1}(p_k)=0$ and
$\alpha=\frac{f_{k-1}(p_k)}{|f_{k-1}(p_k)|}$ otherwise. 
This implies {\bf (b)} for $j=k$. We verify that
\[
\|f_k-f_{k-1}\|_{\cO^{(2n+1)}(X)}=k\|f_{p_k}\|_{\cO^{(2n+1)}(X)}\leq kC\delta_{k}\leq 2^{-k},
\]
and
\[
\sup_{X_{\leq\delta_{k-1}}}|f_k-f_{k-1}|=k\sup_{X_{\leq\delta_{k-1}}}|f_{p_k}|\leq
\frac{C}{\delta^{2n+1}_{k-1}}\sup_{X_{\leq\delta_{k-1}}}|\delta^{2n+1}_X f_{p_k}|\leq 2^{-k},
\]
completing the inductive step.

Now {\bf (c)} implies that the $\cO^{(2n+1)}(X)$-limit $f=\lim f_m$ 
exists, and {\bf (b)}, {\bf (d)}
yield $|f(p_j)|\geq j-1$ for all $j$. The proof is complete.
\end{proof}
The push forward $f\circ\alpha$  
of the function $f$ 
obtained in Lemma\,\ref{lem21} is holomorphic on $\cW$ and 
has polynomial growth while approaching the edge $E$ of $\cW$, because
$E\subset\overline{\pi}(\overline{X})$. 
In particular, 
$f\circ\alpha$ 
has a boundary value, which is a
$CR$ distribution
$f^*$ on $E$. 
By \cite[Lemma 7.2.6]{BER}, 
$f$ is continuous up to the edge near
every point in $E$ 
near which $f^*$ happens to be continuous. 
Hence, by Lemma\,\ref{lem21}, $f^*$ is not continuous at $0$,
because $f\circ\alpha$ is unbounded on a sequence 
in $\cW$ which converges to $0$. 
This completes the proof of 
Theorem \ref{main}.
\end{proof}
\section{The holomorphic extension property}\label{sec4}
Let $M$ be a $CR$ submanifold, of $CR$ dimension $m$, and 
$CR$ codimension $d$, of a $\nuup$-dimensional 
complex manifold $X$. This means that $M$ is a smooth real submanifold of
$X$ and $T^{0,1}M=T^{0,1}X\cap\mathbb{C}TM$. \par
Let $p_0\in{M}$ and let
$(W;z_1,\hdots,z_{\nuup})$ be any coordinate neighborhood
in $X$ centered at $p_0$.
If $m+d=\nuup$, then the embedding 
$M\hookrightarrow{X}$ is generic and the coordinate neighborhood
$(W;z_1,\hdots,z_{\nuup})$  provides a generic 
$CR$-embedding of a neighborhood $U$ of $p_0$ in $M\cap{W}$ into an
open neighborhood of $0$ in $\mathbb{C}^{\nuup}$. If
$m+d=n<\nuup$, we can reorder the coordinates 
$z_1,\hdots,z_{\nuup}$ in such a way that $dz_1(p_0),\hdots, dz_n(p_0)$ are
linearly independent. Then the map 
$\phiup:p\mapsto\phiup(p)=(z_1(p),\hdots, z_n(p))$ yields again a generic 
$CR$-embedding of a neighborhood $U$ of $p_0$ in $M\cap{W}$ into an
open neighborhood of $0$ in $\mathbb{C}^{\nuup}$. 
We get
\begin{thm}\label{lem31} For each 
$a\in\{-\infty,0,\infty\}$ the following are equivalent:
\begin{enumerate}
\item\label{31a} 
the restriction map $\cO_{X,(p_0)}\to\cO^{a}_{M,(p_0)}$ is onto;
\item\label{31b}  the map $\phiup^*:\cO_{\mathbb{C}^n,(p_0)}\to\cO^{a}_{M,(p_0)}$ 
is an isomorphism.
\end{enumerate}
\end{thm}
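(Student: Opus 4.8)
The plan is to factor both maps through the coordinate projection, read off one implication for free, and reduce the other to a single geometric statement about the non-generic embedding. Working in the chart $(W;z_1,\dots,z_\nuup)$ I identify $W$ with an open subset of $\C^\nuup$ and let $\varpi\colon\C^\nuup\to\C^n$ be the projection onto the first $n$ coordinates, so that $\phiup=\varpi\circ(z_1,\dots,z_\nuup)|_M$. For a germ $g\in\cO_{\C^n,(p_0)}$ the pullback $g(z_1,\dots,z_n)$ lies in $\cO_{X,(p_0)}$ and restricts to $M$ as exactly $\phiup^*g$; hence $\phiup^*$ equals the restriction map precomposed with $g\mapsto g(z_1,\dots,z_n)$, and in particular the image of $\phiup^*$ is contained in the image of the restriction map. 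First I would record that $\phiup^*$ is injective: the image $M'=\phiup(U)$ is a generic $CR$ submanifold of $\C^n$, hence a set of uniqueness, since for holomorphic $g$ with $g|_{M'}=0$ the complex-linear form $dg$ annihilates $T_{p_0}M'+J\,T_{p_0}M'=T_{p_0}\C^n$, and applying this to all holomorphic derivatives forces $g=0$. Thus condition (2) is equivalent to surjectivity of $\phiup^*$, and the implication (2)$\Rightarrow$(1) is immediate from the factorization.

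For (1)$\Rightarrow$(2) I would prove that the two images coincide once (1) is assumed. Since $M$ is a graph over $M'$, I may write $z_{n+j}|_M=\psi_j\circ\phiup$ for smooth $CR$ functions $\psi_1,\dots,\psi_{\nuup-n}$ on $M'$, and expanding an ambient germ $F$ in a power series in $z_{n+1},\dots,z_\nuup$ shows that every restriction $F|_M$ is a convergent series in the functions $z_{n+j}|_M$ with coefficients in the image of $\phiup^*$. Consequently the image of the restriction map is the algebra generated by the image of $\phiup^*$ together with the extra coordinate restrictions $z_{n+j}|_M$, and it equals the image of $\phiup^*$ precisely when each $\psi_j$ already lies there, i.e.\ extends holomorphically from $M'$ to $\C^n$. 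Equivalently, this says that near $p_0$ the manifold $M$ lies in an $n$-dimensional complex submanifold $Y\subset X$ in which it is generic; granting this, $Y$ is biholomorphic to a neighborhood of the origin in $\C^n$, restriction to $Y$ carries a germ $F\in\cO_{X,(p_0)}$ to a germ $g\in\cO_{\C^n,(p_0)}$ with $\phiup^*g=F|_M$, and (1) then forces $\phiup^*$ onto.

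The hard part is exactly this geometric fact: that hypothesis (1) forces each $\psi_j$ to extend, equivalently $M$ to lie in an $n$-dimensional complex submanifold. It is not formal, since each single $z_{n+j}|_M$ extends to $X$ trivially (it is a coordinate); the content is that the full space of $a$-$CR$ functions cannot be generated over the image of $\phiup^*$ by functions $\psi_j$ that themselves fail to extend. I would attack it with the boundary-value apparatus of \S\ref{sec3}. Hypothesis (1) first forces minimality at $p_0$ (as in the proof of Theorem\,\ref{main}, non-minimality produces non-extending $CR$ functions, cf.\ \cite{ba-ro}), so by Tumanov's theorem \cite{Tu1} every $a$-$CR$ function extends to a wedge attached to $M'$ in $\C^n$; the task is to upgrade wedge extension to full-neighborhood extension. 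I would transport the hypothesis from $\C^\nuup$ to $\C^n$ by comparing the envelope of holomorphy of the wedge attached to $M$ in $\C^\nuup$ with that of its projection to $\C^n$, using that the extra coordinates are $CR$ functions and hence do not enlarge the hypo-analytic structure; a boundary point of the $\C^n$-envelope lying over $p_0$ should then produce one over $p_0$ in $\C^\nuup$ as well, which via the construction of Lemma\,\ref{lem21} yields an $a$-$CR$ function on $M$ with a non-removable singularity at $p_0$, contradicting (1). The delicate point I anticipate is carrying this envelope comparison out uniformly across the three regularity classes $a\in\{-\infty,0,\infty\}$.
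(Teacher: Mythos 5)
Your preliminary steps are sound and agree with the paper: the factorization through the coordinate projection gives \eqref{31b}$\Rightarrow$\eqref{31a}, injectivity of $\phiup^*$ follows from genericity of $\phiup(U)$ in $\C^n$ (a point the paper leaves tacit), and your reduction of \eqref{31a}$\Rightarrow$\eqref{31b} to the statement that $M$ lies near $p_0$ in an $n$-dimensional complex submanifold is correct --- indeed it is precisely Corollary~\ref{cor12}(1), which the paper deduces \emph{from} this theorem, not the other way around. The gap is in your plan for the ``hard part''. First, the object you propose to compare envelopes with does not exist: $M$ is not generic in $\C^{\nuup}$ (its real codimension there is $2(\nuup-n)+d$, strictly larger than its $CR$ codimension $d$ when $\nuup>n$), so Tumanov's theorem and the boundary-value apparatus of \S\ref{sec3} produce no wedge attached to $M$ inside $\C^{\nuup}$, hence no ``$\C^{\nuup}$-envelope'' whose boundary points you could exhibit. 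Second, no such transport is needed, and this is exactly the insight you missed and the paper exploits: hypothesis \eqref{31a} forces every germ in $\cO^{a}_{M,(p_0)}$ to be the restriction of a holomorphic function, hence $\cC^\infty$-smooth; for $a=-\infty$ this is verbatim $CR$-hypoellipticity, an \emph{intrinsic} property of $M$ that makes no reference to any embedding. Theorem~\ref{main} --- whose proof in \S\ref{sec3} begins by fixing an \emph{arbitrary} generic embedding, so it applies to your $\phiup$ --- then yields surjectivity of $\phiup^*$ outright. Said inside your own contradiction argument: the boundary value $f^*$ supplied by Lemma~\ref{lem21} is a $CR$ \emph{distribution} near $p_0$ that is discontinuous at $p_0$; under \eqref{31a} with $a=-\infty$ it would be the restriction to $M$ of a function holomorphic near $p_0$ in $\C^{\nuup}$, hence continuous. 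That is the whole contradiction; pushing singularities back into a $\C^{\nuup}$-envelope is both unavailable and unnecessary.

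The cases $a=0,\infty$, which you flag as ``delicate'', are a genuine hole rather than a technicality. The singular object produced by Lemma~\ref{lem21} is only a distribution, so it cannot by itself contradict surjectivity onto $\cO^{0}_{M,(p_0)}$ or $\cO^{\infty}_{M,(p_0)}$ (the same problem already infects your minimality step: the distribution carried by a lower-dimensional $CR$ orbit is not continuous, so it does not directly obstruct \eqref{31a} for $a=0,\infty$). What is needed is a device for passing between regularity classes: the Baouendi--Tr\`eves representation $u=\Delta_{L,cZ}^{k}w$ with $w$ a continuous $CR$ function (Lemma~\ref{lem22}), which converts extendability of continuous $CR$ germs into extendability --- hence smoothness --- of all $CR$ distribution germs, together with the Baire-category/approximation argument of \S\ref{sec4} that handles the step from smooth to continuous. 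Your proposal contains no substitute for these tools, so as written it establishes \eqref{31a}$\Rightarrow$\eqref{31b} only for $a=-\infty$, and even that only after the envelope-comparison step is replaced by the direct argument above.
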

\begin{proof}
The equivalence is a consequence of Theorem\,\ref{main}.  
In fact \eqref{31a} implies $CR$-hypoellipticity at $p_0$ and this,
by Theorem\,\ref{main}, implies \eqref{31b}. The inference
\eqref{31b}$\Rightarrow$\eqref{31a} is obvious.
\end{proof}
Moreover, we obtain 
\begin{thm}
  Assume that $M$ is a $CR$ submanifold of a complex manifold $X$
and $p_0\in{M}$.
Then the following are equivalent:
\begin{enumerate}
\item\label{11} 
the restriction map $\cO_{X,(p_0)}\to\cO_{M,(p_0)}^{\infty}$ is onto;
\item\label{12} 
the restriction map $\cO_{X,(p_0)}\to\cO_{M,(p_0)}^{0}$ is onto;
\item\label{13} 
the restriction map $\cO_{X,(p_0)}\to\cO_{M,(p_0)}^{-\infty}$ is onto.
\end{enumerate}
\end{thm}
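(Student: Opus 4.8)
The plan is to reduce the whole statement to Theorem~\ref{main}, after noticing that conditions \eqref{11}, \eqref{12} and \eqref{13} are precisely the surjectivity hypothesis \eqref{31a} of Theorem~\ref{lem31} for $a=\infty$, $a=0$ and $a=-\infty$. Since $M$ is a $CR$ submanifold of the complex manifold $X$ it is in particular locally $CR$-embeddable at $p_0$, so both Theorem~\ref{main} and Theorem~\ref{lem31} are available; passing to a generic local embedding as at the beginning of \S\ref{sec4} puts us exactly in their setting.

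First I would record the purely formal ingredients. We have the sheaf inclusions $\cO_{M,(p_0)}^{\infty}\subseteq\cO_{M,(p_0)}^{0}\subseteq\cO_{M,(p_0)}^{-\infty}$, and, because a germ of holomorphic function on $X$ restricts to a \emph{smooth} $CR$ germ, the three restriction maps share a single image $R\subseteq\cO_{M,(p_0)}^{\infty}$. Thus each of \eqref{11}, \eqref{12}, \eqref{13} reads ``$R$ equals the corresponding stalk'', and the cheap implication \eqref{13}$\Rightarrow$\eqref{11} is immediate: if $R=\cO_{M,(p_0)}^{-\infty}$ then the inclusion chain collapses onto $R$, forcing $R=\cO_{M,(p_0)}^{\infty}$.

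The substantial point is that each of the three hypotheses, taken on its own, already implies $CR$-hypoellipticity at $p_0$. Indeed, each is an instance of \eqref{31a}, and the proof of Theorem~\ref{lem31} shows that \eqref{31a} implies $CR$-hypoellipticity. By definition this means $\cO_{M,(p_0)}^{-\infty}=\cO_{M,(p_0)}^{\infty}$, so together with the inclusions above the whole chain collapses: $\cO_{M,(p_0)}^{\infty}=\cO_{M,(p_0)}^{0}=\cO_{M,(p_0)}^{-\infty}$. Consequently, once the common image $R$ coincides with any single one of these stalks, it coincides with all of them, and the remaining two assertions follow at once. This closes the equivalence.

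I expect the only genuine obstacle to be the implication \eqref{31a}$\Rightarrow$($CR$-hypoellipticity) in the cases $a=\infty$ and $a=0$, that is, the claim that holomorphic extendability of merely smooth, respectively continuous, $CR$ functions already propagates to \emph{all} $CR$ distributions. This is exactly where Theorem~\ref{main} and the envelope-of-holomorphy construction of \S\ref{sec3} are needed: extendability of smooth $CR$ functions is one of the weak forms of the holomorphic extension property, and Theorem~\ref{main} upgrades it to extendability of every $CR$ distribution, which is then smooth because holomorphic functions are real-analytic. Everything else in the proof is bookkeeping with the inclusion chain.
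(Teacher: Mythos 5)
Your formal skeleton (the common image $R\subseteq\cO^{\infty}_{M,(p_0)}$, the collapse of the inclusion chain, the trivial implication \eqref{13}$\Rightarrow$\eqref{11}) is fine, but the whole proof hinges on the claim that \eqref{11}, or \eqref{12}, \emph{taken alone}, implies $CR$-hypoellipticity at $p_0$, and that claim is never actually proved: your justification is circular and rests on a misreading of Theorem~\ref{main}. The nontrivial direction of Theorem~\ref{main} takes $CR$-hypoellipticity as its \emph{hypothesis} and concludes that every $CR$ distribution extends; it does not assert that extendability of all smooth (or continuous) $CR$ functions forces extendability of distributions, and hypothesis \eqref{11} gives you no way to enter that theorem, because a non-smooth $CR$ distribution germ is simply invisible to \eqref{11}. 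Appealing to ``the proof of Theorem~\ref{lem31}'' does not repair this: that proof consists of the one-line assertion that \eqref{31a} implies $CR$-hypoellipticity, which is evident only for $a=-\infty$ (restrictions of holomorphic functions are smooth); for $a=\infty$ and $a=0$ this assertion \emph{is} the implication \eqref{11}$\Rightarrow$\eqref{13}, respectively \eqref{12}$\Rightarrow$\eqref{13}, i.e.\ exactly what you are being asked to prove. So the proposal assumes the conclusion at its only substantive step.

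What the paper does to bridge this gap is precisely the content you omit, and it is not bookkeeping. For \eqref{11}$\Rightarrow$\eqref{12}, after reducing to a generic $M\subset\C^n$, it runs a Baire-category/open-mapping argument: the Fr\'echet spaces $\mathbb{F}_k$ of pairs $(u,v)$, with $u$ smooth $CR$ on the interior of a compact neighborhood $K$ and $v\in\cO(B_{r/2^k})$ extending $u$, exhaust $\cO^{\infty}_M$ under hypothesis \eqref{11}; the Banach--Schauder theorem then yields a uniform bound ${\sup}_{B_{r/2^{\nu+1}}}|\tilde{u}|\leq C\|u\|_{\ell,K'}$, which, applied to powers $f^h$ of polynomials and combined with Cauchy's inequalities, shows that the polynomial hull $\hat{K}$ contains a ball around $0$; the Baouendi--Tr\`eves approximation theorem \cite{ba-tr} then extends every \emph{continuous} $CR$ function. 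For \eqref{12}$\Rightarrow$\eqref{13}, the paper invokes Lemma~\ref{lem22}: every $CR$ distribution is locally $u=\Delta_{L,cZ}^{k}w$ with $w$ continuous $CR$, where $\Delta_{L,cZ}$ acts on restrictions of holomorphic functions as $\bigl({\sum}_{i=1}^n\partial^2/\partial z_i^2\bigr)$; extending $w$ by \eqref{12} and applying this holomorphic operator extends $u$. Theorem~\ref{main} enters the paper's proof only indirectly, through Theorem~\ref{lem31}, to reduce to the generic case. Without these two arguments (or substitutes for them), your equivalence does not close.
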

\begin{proof} Since the statement is local, using 
Theorem\,\ref{lem31},
we can as well assume that $M$ is a 
generic $CR$ submanifold of an open ball
in $\mathbb{C}^n$, centered  at $p_0\! =\! 0$.  \par
To show that \eqref{11}$\Rightarrow$\eqref{12} it suffices to prove
that, for every compact $K\subset{M}$ containing a neighborhood 
of $0$ in $M$, the polynomial hull 
\begin{equation*}
  \hat{K}=\{z\in\C^n\!\mid\; |f(z)|\leq{\sup}_K|f|,\;
\forall f\in\C[z_1,\hdots,z_n]\}
\end{equation*} of $K$ in $\mathbb{C}^n$
contains a neighborhood of $0$ in $\mathbb{C}^n$. The implication will
indeed follow then by the approximation theorem in \cite{ba-tr}. \par
Let $\ring{K}$ be the interior in $M$ of an arbitrarily 
fixed compact neighborhood
$K$ of $0$ in~$M$. 
\par
For $r>0$, set $B_r=\{z\in\C^n\!\mid\; |z|<r\}$.
Fix $r>0$ in such a way that 
\mbox{$B_r\cap{M}$} is contained in some coordinate neighborhood 
$(U;t_1,\hdots,t_{2m+d})$ in $M$, with
$U\subset \ring{K}$. Then for every
integer $k$, the set
\begin{equation*}
  \mathbb{F}_k=\{(u,v)\in\cO^{\infty}_M(\ring{K})\times\cO(B_{r/2^k})\mid
v|_{M\cap{B_{r/2^k}}}=u|_{M\cap{B_{r/2^k}}}\}
\end{equation*}
is a closed subspace of 
the product $\cO^{\infty}_M(\ring{K})\times\cO(B_{r/2^k})$, endowed with
its standard Fr\'echet topology, and hence a
Fr\'echet space. 
The projection into the
first coordinate defines continuous linear maps $\piup_k:\mathbb{F}_k
\to \cO^{\infty}_M(\ring{K})$. By the assumption, 
${\bigcup}_{k}\piup_k(\mathbb{F}_k)=\cO^{\infty}_M(\ring{K})$. Hence some
$\piup_{{\nu}}(\mathbb{F}_{{\nu}})$ is of the second Baire
category. Then $\piup_{{\nu}}:\mathbb{F}_{{\nu}}\to\cO^{\infty}_M(\ring{K})$
is surjective and open by the Banach-Schauder theorem and we get:
\begin{equation*}
\left\{\begin{gathered}
  \exists C>0,\;\ell\in\mathbb{Z}_+,\; K'\Subset{K}\quad\text{such that}\quad
\forall u\in\cO^{\infty}_M(\ring{K})\;\;\exists\, \tilde{u}\in\cO_{\mathbb{C}^n}(
B_{2^{-{\nu}}r})\\
\text{with}\;\; \tilde{u}|_{M\cap{B_{r/2^{{\nu}}}}}=u|_{M\cap{B_{r/2^{{\nu}}}}},
\;\text{and}\;\;
{\sup}_{B_{r/2^{{\nu}+1}}}|\tilde{u}|\leq C\,\|u\|_{\ell,K'}=
{\sup}_{K'}{\sup}_{|\alpha|\leq\ell}\left|\tfrac{\partial^{|\alpha|}u}{\partial{t}^{\alpha}}
\right|.
\end{gathered}\right.
\end{equation*}
For $\epsilon>0$ set $K_{\epsilon}=\{z\in\mathbb{C}^n\mid \sup_{z'\in{K}}|z-z'|\leq
\epsilon\}$. By Cauchy's inequalities,
there is a positive constant $C_{\epsilon}$ such that 
\begin{equation*}
  \|f\|_{\ell,K'}\leq C_{\epsilon} {\sup}_{K_{\epsilon}}|f|,\quad
\forall f\in\mathbb{C}[z_1,\hdots,z_n]. 
\end{equation*}
This implies that 
\begin{equation*}
{\sup}_{B_{r/2^{{\nu}+1}}}|f|\leq C\,C_{\epsilon}{\sup}_{K_{\epsilon}}
|f|,\quad \forall f\in\mathbb{C}[z_1,\hdots,z_n].
\end{equation*}
An application of 
this inequality to the powers $f^h$ of the holomorphic polynomials
shows that in fact 
\begin{equation*}
{\sup}_{B_{r/2^{{\nu}+1}}}|f|\leq {\sup}_{K_{\epsilon}}
|f|,\quad \forall f\in\mathbb{C}[z_1,\hdots,z_n],
\end{equation*}
i.e. that $B_{r/2^{{\nu}+1}}$ is contained in the polynomial hull 
$\hat{K}_{\epsilon}$ of $K_{\epsilon}$. 
Since $\hat{K}={\bigcap}_{\epsilon>0}\hat{K}_{\epsilon}$,
the polynomial hull 
$\hat{K}$ contains
$B_{r/2^{{\nu}+1}}$. This completes the proof of 
\eqref{11}$\Rightarrow$\eqref{12}.
\par
To prove the implication \eqref{12}$\Rightarrow$\eqref{13} we use 
the elliptic partial
differential operator introduced in \cite{ba-tr} 
(see also \cite[Ch.II]{t92}). This is constructed in the following way.
We can assume that $dz_1,\hdots,dz_n,d\bar{z}_1,\hdots,d\bar{z}_m$ define
a maximal set of independent differentials on a neighborhood $U$ of
$0$ in $M$. Then we uniquely define commuting smooth complex vector fields
$L_1,\hdots,L_n,Z_1,\hdots,Z_m$ on $U$ by requiring that 
\begin{equation*}
  L_iz_j=\delta_{i,j},\; L_i\bar{z}_k=0,\; Z_hz_j=0,\; Z_h\bar{z}_k=\delta_{h,k},
\quad\text{for $1\leq{i,j}\leq{n}$, $1\leq{h,k}\leq{m}$}.
\end{equation*}
Then, for a large $c\in\R$, 
\begin{equation}\label{eq:26}
  \Delta_{L,cZ}={\sum}_{i=1}^nL_i^2+c^2{\sum}_{h=1}^mZ_h^2
\end{equation}
is elliptic on a neighborhood of $0$ in $M$, that, after shrinking, we can take
equal to~$U$. If $f\in\cO_{\mathbb{C}^n}(W)$ for an open neighborhood
$W$ of $0$ in $\mathbb{C}^n$ and $\nu$ is a non negative integer, then
\begin{equation*}
  \Delta_{L,cZ}^{k}f|_{U\cap{W}}=\left.\left(
\left({\sum}_{i=1}^n\tfrac{\partial^2}{\partial{z}_i^2}\right)^k
f\right)\right|_{U\cap{W}}.
\end{equation*}\par
In \cite{ba-tr} the following is proved
\begin{lem}\label{lem22}
  There is an open neighborhood $U'$ of $0$ in $U$ such that for every
$u\in\cO^{-\infty}_M(U)$ there is $w\in\cO^0_M(U')$ and an integer
$k\geq{0}$ such that
\begin{equation}\label{eq:27}
\qquad\qquad \qquad\qquad \qquad\qquad\;\;\,
 u|_{U'}=\Delta_{L,cZ}^{k}w.\qquad\qquad \qquad\qquad \qquad\qquad\;\;\,\qed
\end{equation}
\end{lem}
Let $u\in\cO^{-\infty}_M(U)$. By Lemma\,\ref{lem22} there is $w\in\cO_M^0(U')$
satisfying \eqref{eq:27}. If \eqref{12} is valid, there is an open neighborhood
$W$ of $0$ in $\C^n$ and a holomorphic function
$\tilde{w}\in\cO_{\mathbb{C}^n}(W)$ such that $\tilde{w}|_{U'\cap{W}}=
w|_{U'\cap{W}}$. In view of \eqref{eq:26}, 
$\tilde{u}=\big({\sum}_{i=1}^n\tfrac{\partial^2}{\partial{z}_i^2}\big)^{k}
\tilde{w}$
is a holomorphic function in $W$ such that $\tilde{u}|_{U'\cap{W}}=
u|_{U'\cap{W}}$. This shows that \eqref{12}$\Rightarrow$\eqref{13}.
Since the implication \eqref{13}$\Rightarrow$\eqref{11} is trivial,
the proof is complete.
\end{proof}
As a corollary of Lemma\,\ref{lem22}, we also state the following
regularity result, which will be useful to apply \cite{AHNP1}
to obtain holomorphic extension.
\begin{cor}\label{lem32a} Let $M$ be a $CR$ submanifold of a complex
manifold $X$, $p_0\in{M}$ and assume that all germs 
$\alphaup\in\cO_{M,(p_0)}^{-\infty}$ which are in $L^2_{\text{loc}}$ at $p_0$
are in $\cO_{M,(p_0)}^{\infty}$. Then $\cO_{M,(p_0)}^{-\infty}=\cO_{M,(p_0)}^{\infty}$.
\qed
\end{cor}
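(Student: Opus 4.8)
The plan is to deduce full $CR$-hypoellipticity from the weaker $L^2$-version of the hypothesis by exploiting the factorization of arbitrary $CR$ distributions through the elliptic operator $\Delta_{L,cZ}$ provided by Lemma~\ref{lem22}. The key point is that this operator lowers the regularity question for distributions to the very same question for \emph{continuous} $CR$ functions, which is exactly what the assumption already controls.

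Concretely, I would first take an arbitrary germ $u\in\cO_{M,(p_0)}^{-\infty}$, represent it by a $CR$ distribution on a neighborhood $U$ of $p_0$, and apply Lemma~\ref{lem22} to produce a neighborhood $U'$ of $p_0$, a continuous $CR$ function $w\in\cO_M^0(U')$, and an integer $k\geq 0$ with $u|_{U'}=\Delta_{L,cZ}^k w$. Next I would observe that the germ of $w$ at $p_0$ lies in $\cO_{M,(p_0)}^{-\infty}$ and, being continuous, is in $L^2_{\text{loc}}$ at $p_0$; hence the hypothesis applies to $w$ and yields $w\in\cO_{M,(p_0)}^{\infty}$, i.e. $w$ is smooth near $p_0$. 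Finally, since $\Delta_{L,cZ}$ is a differential operator with smooth coefficients, $u=\Delta_{L,cZ}^k w$ is smooth in a neighborhood of $p_0$, so $u\in\cO_{M,(p_0)}^{\infty}$. As $u$ was arbitrary and the inclusion $\cO_{M,(p_0)}^{\infty}\subset\cO_{M,(p_0)}^{-\infty}$ is automatic, this gives the asserted equality.

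I do not anticipate a serious obstacle: the substantive content is entirely packaged in Lemma~\ref{lem22}, and the remaining verifications are elementary, namely that a continuous function is locally square-integrable and that a smooth-coefficient differential operator preserves $\cC^\infty$-smoothness. The single point to state with care is that the germ of $w$ genuinely meets the hypothesis, i.e. that it is simultaneously a $CR$-distribution germ and of class $L^2_{\text{loc}}$ at $p_0$, so that passing from $u$ to $w$ replaces the distributional regularity problem by one to which the assumption applies verbatim.
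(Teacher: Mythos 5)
Your proposal is correct and is precisely the argument the paper intends: the corollary is stated with a \qed as an immediate consequence of Lemma~\ref{lem22}, namely factoring an arbitrary $CR$ distribution germ as $u=\Delta_{L,cZ}^{k}w$ with $w$ a continuous (hence $L^2_{\text{loc}}$) $CR$ function, applying the hypothesis to $w$, and using that $\Delta_{L,cZ}$ has smooth coefficients. The only point worth making explicit is the routine local reduction to a generic $CR$ embedding (as discussed before Theorem~\ref{lem31}) so that Lemma~\ref{lem22} applies, but this is exactly how the paper uses it as well.
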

\section{Wedge extension and the wave front set}\label{sec5}
Theorem\,\ref{main} relates holomorphic extension to $\cC^\infty$-regularity.
Here we make a few remarks relating holomorphic 
wedge extension to the $\cC^\infty$
wave front set. 
For extension to open wedges attached to $M$, 
it is known that the directions of extension are nicely reflected
by the \textit{analytic} wave front set, which 
provides information on the extension of any {\it individual}
$CR$ distribution. Below we will see that local properties for {\it simultaneous} 
extension are related to the $\cC^\infty$-wave front sets of \textit{all} the
elements in $\cO^{-\infty}_{M,(p_0)}$. \par
Let $HM$ be the subbundle of the tangent bundle $TM$ consisting of the
real parts of vectors in $T^{0,1}M$. \par
For a point $p$ of a smooth $CR$ manifold $M$, 
we denote by $\tO_M(p)$ the $CR$ orbit of $p$ in $M$, i.e. the set of
all points of $M$ that can be linked with $p$ by a 
piecewise smooth curve with velocity vectors in $HM$.
A fundamental
result of Sussmann (\cite{Su73}) tells that each $CR$ orbit $\tO_M(p)$
is a smooth $CR$ submanifold, which turns out to have 
the same $CR$ dimension of $M$. 
If $U$ is an open neighborhood of $p$ in $M$ we can consider the
orbit $\tO_U(p)$. Clearly, if $p\in{V}^{\text{open}}\subset{U}^{\text{open}}
\subset{M}$, then $\tO_V(p)\subset\tO_U(p)$. The family of
$CR$ orbits $\tO_U(p)$, 
for $p\in{U}^{\text{open}}\subset{M}$, indexed by 
the filter
of open neighborhoods of $p$, 
uniquely defines a \textit{germ} of $CR$ manifold 
$\tO_{M,loc}(p)$, which is called \emph{the local $CR$ orbit} of $p$.
Tumanov's theorem in \cite{Tu1} 
yields local holomorphic extension to open wedges if $\tO_{U}(p_0)$ is open
(see also \cite{ba-ro, J96, Me94, me-po1, tr1}). 
More generally, the dimension of 
$\tO_{U}(p_0)$ can be related 
to the maximal number of
independent directions of $CR$ extension \cite{Tu2}.
\par
\smallskip
Denote by $\cD'(U)$, for $U^{\text{open}}\subset{M}$, the space of 
complex valued distributions
in $U$, and by  $\wf(u)\subset{T}^*U$ the wave front set of 
$u\in{\cD}'(U)$. 
For basic definitions and a thorough 
introduction to this topic we refer to \cite{H}. 
Recall that $\wf(u)$ is a closed conical
subset of $\dot{T}^* U$, which is the cotangent bundle
deprived of its zero section. It will be convenient to us to consider
also $\awf(u)=\wf(u)\cup\underline{0}$, where $\underline{0}$ is the
zero section of $T^*M$.  \par
If
$U^{\text{open}}\subset{M}$, 
and $u\in\cO^{-\infty}(U)$, then $\wf(u)\subset{H}^0M$,
where
$H^0M=\{\xiup\in{T}^*M\mid \xiup(v)=0,\;\forall v\in{H}_{\piup(\xi)}M\}$
is the \emph{characteristic bundle}
of the tangential $CR$ system.\par 
We prove the following
\begin{thm}\label{wf}
Let $M$ be a $CR$ submanifold, of $CR$ dimension $m$ and $CR$ codimension $d$,
of a complex manifold $X$, and $p_0\in{M}$. Then the following are equivalent
\begin{enumerate}
\item\label{wf1} $\dim_{\mathbb{R}}\tO_{M,\mathrm{loc}}(p_0)=2m+k$ ($0\leq{k}\leq{d}$);
\item\label{wf2} 
there is a $CR$ distribution $u$, defined on an open neighborhood 
$U$ of $p_0$, such that $\awf(u)\cap{T}^*_{p_0}M$ contains a 
$(d\! -\! k)$-dimensional $\R$-linear subspace, and $k$ is the smallest
integer with this property.
\end{enumerate}
\par
Assume that \eqref{wf1} holds true and that $\tO_{M,\mathrm{loc}}(p_0)$ does not
have the holomorphic extension property at $p_0$. Then there exists 
a $CR$ distribution $u$, defined on an open neighborhood 
$U$ of $p_0$, such that $\awf(u)\cap{T}^*_{p_0}M$ properly contains a 
$(d\! -\! k)$-dimensional $\R$-linear subspace.
\end{thm}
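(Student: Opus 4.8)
The plan is to realize a singular $CR$ distribution living on the local orbit as the ``leading transverse jet'' of a $CR$ distribution on all of $M$, and then to read off its wave front set over $p_0$. Write $N=\tO_{M,\mathrm{loc}}(p_0)$. By Sussmann's theorem \cite{Su73} this is a germ of $CR$ submanifold of $M$ of $CR$ dimension $m$, and by \eqref{wf1} its real dimension is $2m+k$; hence $N$ has $CR$ codimension $k$ and real codimension $d-k$ in $M$. Since $N$ is a single $CR$ orbit, $HN=HM|_N$, so its conormal space $V_0:=N^\perp_{p_0}\subset T^*_{p_0}M$ is a $(d-k)$-dimensional linear subspace contained in the characteristic bundle $H^0_{p_0}M$; this is the subspace I will show is properly contained in $\awf(u)\cap T^*_{p_0}M$. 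Being a single orbit, $N$ is minimal at $p_0$, and it is locally $CR$-embeddable because $N\subset M\subset X$. Thus Theorem \ref{main} applies to $N$: the failure of the holomorphic extension property for $N$ at $p_0$ forces $N$ to be non $CR$-hypoelliptic there, so there is a germ $v\in\cO^{-\infty}_{N,(p_0)}\setminus\cO^{\infty}_{N,(p_0)}$. As $v$ is singular at $p_0$ and is a $CR$ distribution, $\wf(v)\cap T^*_{p_0}N$ is a nonempty subset of $H^0_{p_0}N\setminus\underline{0}$; fix $\eta_0$ in it.

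Next I would carry $v$ up to $M$. Choosing local coordinates $(y,s)$ on $M$ with $N=\{s=0\}$, $s\in\R^{d-k}$, so that $V_0=\mathrm{span}\{ds_1,\dots,ds_{d-k}\}$ and the restriction $T^*_{p_0}M\to T^*_{p_0}N$ is $(\eta,\sigma)\mapsto\eta$, the $(0,1)$ fields of $M$ restrict along $N$ to those of $N$ and acquire only $O(s)$ normal components off $N$. Exactly as in the construction behind the non-minimal case of Theorem \ref{main} (see \cite{tr1,ba-ro}), one can therefore solve the tangential $CR$ equations by a finite transverse jet
\[
u=\sum_{|\gamma|\le L} v_\gamma\otimes\partial_s^\gamma\delta(s),\qquad v_0=v,
\]
obtaining a genuine $CR$ distribution $u\in\cO^{-\infty}_M(U)$ with $\operatorname{singsupp}u\subset N$. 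The transverse $\partial_s^\gamma\delta$-factors contribute the whole conormal $V_0$ to the wave front set, so that $V_0\subseteq\awf(u)\cap T^*_{p_0}M$; this is consistent with, and can be quoted from, the implication \eqref{wf1}$\Rightarrow$\eqref{wf2} applied to the smooth part of the data.

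It then remains to produce a front direction outside $V_0$. Here I would avoid delicate cancellations by testing transversally: for $\psi\in\cD(\R^{d-k})$ set $u_\psi(y)=\langle u(y,\cdot),\psi\rangle\in\cD'(\R^{2m+k})$. Choosing $\psi$ with $\psi(0)=1$ and $\partial^\gamma\psi(0)=0$ for $1\le|\gamma|\le L$ isolates the leading coefficient, giving $u_\psi=v$, which is singular at $p_0$. On the other hand $u_\psi=\pi_*\big(u\cdot(1\otimes\psi)\big)$ for the projection $\pi:(y,s)\mapsto y$, so by the wave front bound for pushforwards \cite{H}, and since $\operatorname{singsupp}u\subset\{s=0\}$, one has $\wf(u_\psi)|_{p_0}\subseteq\{\eta:(p_0;\eta,0)\in\wf(u)\}$. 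Because $u_\psi=v$ is not smooth at $p_0$, the left-hand side is nonempty, so there is $\eta\neq0$ with $(p_0;\eta,0)\in\wf(u)$. As $(\eta,0)$ projects to $\eta\neq0$ in $T^*_{p_0}N$, it is not conormal, i.e.\ $(\eta,0)\notin V_0$. Together with $V_0\subseteq\awf(u)\cap T^*_{p_0}M$, this shows that $\awf(u)\cap T^*_{p_0}M$ properly contains the $(d-k)$-dimensional subspace $V_0$, as required.

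The main obstacle is the carrying step, namely producing a \emph{genuine} $CR$ distribution on $M$ whose leading transverse coefficient is the prescribed singular $v$. The naive choice $v\otimes\delta(s)$ fails the tangential $CR$ equations precisely because the $(0,1)$ fields are tangent to $N$ only along $N$, and one must compensate by the higher jets $v_\gamma\otimes\partial_s^\gamma\delta$, solving a triangular system in $\gamma$. This is the content of \cite{tr1,ba-ro}, already used in Theorem \ref{main}; the genuinely new work is the accompanying wave front bookkeeping—showing simultaneously that $V_0\subseteq\awf(u)$ and that the tangential singularity of $v$ forces a non-conormal covector into $\wf(u)$—which the transverse-testing argument above is designed to make clean.
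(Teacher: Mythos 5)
Your overall architecture does match the paper's: apply Theorem~\ref{main} to the orbit $N=\tO_{M,\mathrm{loc}}(p_0)$ (minimal, embeddable, so non-extension forces a singular germ $v\in\cO^{-\infty}_{N,(p_0)}\setminus\cO^{\infty}_{N,(p_0)}$), push $v$ up to a $CR$ distribution on $M$ carried by $N$, and then do the wave front bookkeeping; your final transverse-testing step (pairing in $s$ against $\psi$ and using the pushforward wave front bound to extract a non-conormal covector $(\eta,0)$) is a legitimate alternative to the paper's direct appeal to the tensor-product formula \cite[Theorem 8.2.9]{H}. However, there are two genuine gaps. First, the statement you were asked to prove includes the equivalence \eqref{wf1}$\Leftrightarrow$\eqref{wf2}, which you never prove but do invoke (``can be quoted from the implication \eqref{wf1}$\Rightarrow$\eqref{wf2}''). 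In the paper this equivalence is itself substantial: the upper bound (no $\R$-subspace of dimension $>d-k$ in $\awf(u)$) uses $k$ attached $CR$ manifolds $M_j$ and a deformation argument giving $\wf(u)\subset\{\xiup\in H^0M\mid\xiup(X_j)\geq 0\}$ for continuous $CR$ functions, plus the Baouendi--Tr\`eves factorization $u=(\Delta_{L+cZ})^q g$ to reduce distributions to continuous functions. Moreover, even granting that implication, it only yields \emph{some} distribution whose $\awf$ contains the conormal; it does not show $V_0\subseteq\awf(u)$ for \emph{your} $u$, which requires an argument about distributions supported on $N$ (this is what the paper's tensor-product computation delivers in one stroke).

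Second, and more seriously, the step you yourself call ``the main obstacle'' --- producing a \emph{genuine} $CR$ distribution on $M$, carried by $N$, whose leading transverse coefficient is the prescribed \emph{singular} $v$ --- is asserted by citing \cite{tr1,ba-ro}, but it is not the content of those references. What \cite{ba-ro} provides is a $CR$ distribution with \emph{smooth} density: a weight function ${v}$, smooth near $p_0$ in $N$ with ${v}(p_0)=1$, such that $\phi\mapsto\int_N{v}\phi\,d\mu$ is $CR$ on $M$. Upgrading the smooth density to an arbitrary singular $CR$ distribution on $N$ is exactly what the paper must prove, and it does so in Lemma~\ref{pullback}: choosing $\mu=\pi^*\mu'$ and approximating $\tilde{u}$ by holomorphic polynomials $Q_j$ (Baouendi--Tr\`eves), one gets $T_{\tilde{u}}=\lim Q_jT_N$ as a limit of $CR$ distributions, each $Q_jT_N$ being a $CR$ distribution multiplied by the restriction of a holomorphic function. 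Your proposed mechanism also misidentifies the difficulty: since $N$ has the same $CR$ dimension as $M$, the $(0,1)$ fields are tangent to $N$ along $N$, and the obstruction to $v\otimes\delta(s)$ being $CR$ is a \emph{zeroth-order} term (coming from the transverse divergence of the $O(s)$ normal components), which is removed by a multiplicative smooth weight --- no higher jets enter. Your ``triangular system in $\gamma$'' would instead require solving \emph{inhomogeneous} tangential Cauchy--Riemann equations on $N$ for the correctors $v_\gamma$, a solvability problem that is not generally tractable and is nowhere justified in your write-up. As it stands, the central construction of your proof is open, and the wave front conclusion rests on it.
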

\begin{rmk}
Tumanov's theorem (see \cite{Tu1}) can be restated by saying
that all $CR$ functions defined on any fixed neighborhood of 
$p_0$ admit a
holomorphic extension to an open wedge with edge containing $p_0$ 
if and only 
if no $CR$ distribution $u$ has a $\awf(u)$ which contains
a real line of $T_{p_0}^* M$. 
Theorem\,\ref{wf} can be considered a generalization of that result to
the non minimal case.
\end{rmk}\begin{proof} 
We can assume that $M$ is a generic $CR$ submanifold
of $\mathbb{C}^n$.\par
Let
$\dim_{\mathbb{R}}\tO_{M,loc}(p_0)=2m+k$. Fix an open neighborhood $U$ of
$p_0$ in $M$.  Then there are generic $CR$ manifolds with boundary
$M_1,\ldots,M_k$ in $\mathbb{C}^n$, of dimension $2m\! +\! k\! +\! 1$, 
attached to $M$ along their boundaries 
near $p_0$, and such that every continuous $CR$ function $u$ on $U$
uniquely 
extends to each $M_j$ as a $CR$ function, continuous up to the boundary. 
Moreover the $M_j$ can be chosen so
that there are linearly independent vectors 
$X_1,\ldots,X_k\in T_{p_0}M\backslash T_{p_0}^c M$ such that $JX_j$
points into $M_j$. Then a 
standard deformation argument shows that 
for any continuous $CR$ function $u$ on $U$, 
$\wf(u)$ 
is contained in
$\{\xiup\in{H}^0M\mid \xiup(X_j)\geq{0}\}$ 
(this was observed in \cite{Tr2} 
for the larger analytic wave front set), 
so that
$\awf(u)$
cannot contain any 
$\R$-subspace of dimension larger than $d\! -\! k$. \par
To treat the case of a general
$CR$ distribution $u$, we utilize \cite{ba-tr}. There it is shown that
$u=(\Delta_{L+cZ})^q g$ on an open neighborhood $U'$ of $p_0$ in $M$,
where $\Delta_{L+cZ}$ 
is an appropriate second-order differential operator with smooth coefficients,
$q$ a sufficiently large positive integer and $g$ a continuous $CR$ function.
Since $\wf(u)\subset\wf(g)$, the fact that 
$\awf(u)\cap{T}^*_{p_0}M$ does not contain any
$\R$-subspace of dimension larger than $d\! -\!{k}$ follows from 
the case of continuous $CR$ functions.
\par
On the other hand, assume that there is a $CR$ submanifold $N$ of an open
neighborhood $U$ of $p_0$ in $M$, with the same $CR$ dimension $m$ and
$p_0\in{N}$. By taking $U$ small, we can find a $CR$ distribution 
on $U$
carried by $N$. \par
Indeed: When $N$ is open, there is nothing to prove.
If $N$ has smaller dimension, we fix
a positive measure $\mu$ with smooth density on $N$.  
A construction in \cite{ba-ro} yields a function ${v}$ 
which is $\cC^\infty$-smooth in a neighborhood of
$p_0$ in $N$, with ${v}(p_0)=1$, and such that
\begin{equation}\label{V}
T_N[\phi]=\int_N {v}\phi\,d\mu,\,\quad \phi\in\cD(U),
\end{equation}
is a $CR$ distribution on a possibly smaller 
neighborhood $U$ of $p_0$ in $M$.
In this case we obtain 
$\wf(u)\cap T_{p_0}^* M=(T_{p_0} N)^\perp$.
This completes the proof of the implication
\eqref{wf1}$\Rightarrow$\eqref{wf2}. The argument also shows that,
if there is a $CR$ distribution $u$, defined on a neighborhood 
$U$ of $p_0$, such that $\awf(u)\cap{T}^*_{p_0}M$ contains
an $\ell$-dimensional $\R$-subspace, then
$\dim_{\mathbb{R}}\tO_{M,\text{loc}}(p_0)\leq{2m\! +\! d\! -\!\ell}$. Thus we obtain
also the opposite implication \eqref{wf2}$\Rightarrow$\eqref{wf1}.
\par
Let us turn to the proof of the last statement. 
If $\tO_{M,loc}(p_0)$ is open, it is a consequence of Theorem \ref{main},
because a distribution $u$ with
$\wf(u)\cap T_{p_0}^* M=\emptyset$ is smooth near $p_0$. If $\tO_{M,loc}(p_0)$ 
is lower-dimensional,
we fix a $CR$ isomorphism $\pi:N\rightarrow N'\subset\C^{n'}$ from $N$ 
to a generic $CR$ manifold in
some lower-dimensional space. As explained 
before Lemma\,\ref{lem31}, we may assume that 
$\pi$ is induced by the
projection of $\C^n$ onto the complex subspace 
$\C^{n'}$ 
of the first $n'$ coordinates $z_1,\ldots,z_{n'}$. 
The Baouendi-Treves approximation
theorem says that there is a measure $\mu'$ on $N'$, 
with a smooth density on $N'$, 
such that any $CR$ distribution $S$ 
on $N'$ can be approximated
by polynomials $Q(z_1,\ldots,z_{n'})$, in the sense that
\begin{equation}\label{Q}
\int_{N'} Q_j\phi\,d\mu'\rightarrow S[\phi],\quad\forall\phi\in\cD(U'),
\end{equation}
holds on an appropriate neighborhood $U'\subset N'$ of $0=\pi(p_0)$. 
We can 
choose $\mu=\pi^*\mu'$ in (\ref{V}). We have the following Lemma.
\begin{lem}\label{pullback}
There is a neighborhood $U\subset M$ of $p_0$ such that for is any $CR$ 
distribution $u$ on $N'$
the formula
\begin{equation}\label{T_u}
T_u[\phi]=u[({v}\phi)\circ\pi^{-1}],\quad\forall\phi\in\cD(U),
\end{equation}
defines a $CR$ distribution $T_u$ on $U$ with support contained in $N\cap U$.
\end{lem}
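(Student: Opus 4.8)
The plan is to check the three requirements on $T_u$ separately: that it is a well-defined element of $\cD'(U)$, that it is carried by $N\cap U$, and that it satisfies the tangential $CR$ equations, the last being the only point of substance. For the first two I would shrink $U$ so that $\pi(N\cap U)\subset U'$ and factor $\phi\mapsto(v\phi)\circ\pi^{-1}$ as restriction $\cD(U)\to\cD(N\cap U)$, multiplication by the smooth function $v$, and pullback by the diffeomorphism $\pi^{-1}$; each is linear and continuous with values in $\cD(U')$, so composing with $u\in\cD'(U')$ exhibits $T_u$ as a distribution on $U$. If $\supp\phi\cap(N\cap U)=\emptyset$ then $\phi|_{N\cap U}\equiv 0$, hence $(v\phi)\circ\pi^{-1}=0$ and $T_u[\phi]=0$; thus $\supp T_u\subset N\cap U$.

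For the $CR$ property the decisive structural fact is that $N=\tO_{M,\mathrm{loc}}(p_0)$ has the same $CR$ dimension $m$ as $M$, so that $T^{0,1}N\subset T^{0,1}M|_N$ is an equality of rank-$m$ bundles; consequently every $Z\in\Gamma(U,T^{0,1}M)$ restricts on $N\cap U$ to a section $Z_N$ of $T^{0,1}N$, and $W:=\pi_*Z_N$ is a section of $T^{0,1}N'$ because $\pi$ is a $CR$-diffeomorphism. Writing the formal adjoint as $Z'\phi=-Z\phi-(\mathrm{div}_\mu Z)\phi$ and using that $Z_p$ is tangent to $N$ for $p\in N$, I would observe that $Z'\phi|_N$ depends only on $\phi|_N$, namely $Z'\phi|_N=-Z_N(\phi|_N)-(\mathrm{div}_\mu Z)|_N\,(\phi|_N)$. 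Setting $\psi=(\phi|_N)\circ\pi^{-1}$ and $\tilde v=v\circ\pi^{-1}$, and moving the first-order part of the resulting operator onto $u$ through the identities $\tilde v\,W\psi=W(\tilde v\psi)-(W\tilde v)\psi$ and $W(\tilde v\psi)=-W'(\tilde v\psi)-(\mathrm{div}_{\mu'}W)\tilde v\psi$ --- legitimately, since the $CR$ distribution $u$ annihilates every $W'\chi$ --- I would reduce $T_u[Z'\phi]=u[(v\,Z'\phi|_N)\circ\pi^{-1}]$ to $u[g_Z\psi]$ for a single smooth function $g_Z$ on $U'$ assembled from $\tilde v$, $W$, $\mathrm{div}_{\mu'}W$ and $(\mathrm{div}_\mu Z)|_N$, and, crucially, independent of $u$.

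The point is then that $g_Z\equiv 0$. I would test the reduction on the particular $CR$ distribution $u_0[\chi]=\int_{N'}\chi\,d\mu'$, i.e. the constant function $1$; since $\mu=\pi^*\mu'$ the associated functional is exactly $T_{u_0}=T_N$, which is a $CR$ distribution by the Baouendi--Rothschild construction of $v$ recalled just before the lemma. Hence $0=T_N[Z'\phi]=\int_{N'}g_Z\,\psi\,d\mu'$ for all $\psi\in\cD(U')$, and since $d\mu'$ has smooth positive density this forces $g_Z\equiv 0$. Substituting back gives $T_u[Z'\phi]=u[g_Z\psi]=0$ for every $CR$ distribution $u$, every $Z\in\Gamma(U,T^{0,1}M)$ and every $\phi\in\cD(U)$, which is precisely the $CR$ property of $T_u$.

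The main obstacle I anticipate is purely bookkeeping: keeping the zeroth-order divergence terms and the transport of measures and adjoints under $\pi$ mutually consistent while carrying out the integration by parts. The conceptual crux, by contrast, is light --- the equality $T^{0,1}M|_N=T^{0,1}N$ transfers the whole computation verbatim from $M$ to $N'$ and makes the correction term $g_Z$ independent of $u$, so that the already established $CR$-ness of $T_N$ does all the remaining work.
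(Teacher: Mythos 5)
Your proof is correct, but it takes a genuinely different route from the paper's. The paper proves the lemma via the Baouendi--Treves approximation theorem, which is exactly why the measure $\mu'$ and the polynomials $Q_j$ of \eqref{Q} are introduced just before the statement: since $\mu=\pi^*\mu'$, the functionals $Q_jT_N:\phi\mapsto\int_N Q_j v\phi\,d\mu$ converge in the sense of distributions to $T_u$, each $Q_jT_N$ is $CR$ because it is the product of the $CR$ distribution $T_N$ by the restriction of a holomorphic polynomial, and $CR$-ness passes to distributional limits. You instead verify the tangential $CR$ equations directly: using that the orbit $N$ has the same $CR$ dimension as $M$ (so $T^{0,1}M|_N=T^{0,1}N$, and each $Z\in\Gamma(U,T^{0,1}M)$ restricts to $N$ and pushes forward under $\pi$ to a section $W$ of $T^{0,1}N'$), you integrate by parts on $N'$ to reduce $T_u[Z'\phi]$ to $u[g_Z\psi]$ with $g_Z$ independent of $u$, and then the key step of testing the identity with $u\equiv 1$ --- for which $T_u=T_N$ is $CR$ by the Baouendi--Rothschild construction recalled at \eqref{V} --- forces $g_Z\equiv 0$, whence $T_u[Z'\phi]=0$ for every $CR$ distribution $u$. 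Both arguments rest on the same external input, namely the $CR$-ness of $T_N$ from \cite{ba-ro}; yours dispenses with the approximation theorem entirely (the approximation property of $\mu'$ is never used, only that it has smooth positive density and that $\mu=\pi^*\mu'$), at the price of the adjoint/divergence bookkeeping you acknowledge, while the paper's argument is shorter because the approximation machinery is already set up for the surrounding proof and the product-and-limit device avoids all first-order computations. Two small points of care for your write-up: the measure defining the formal adjoint $Z'$ on $M$ is the ambient measure of \S\ref{sec2}, not the measure $\mu=\pi^*\mu'$ carried by $N$, so the two must be kept distinct in your formula $Z'\phi=-Z\phi-(\mathrm{div}\,Z)\phi$; and the surjectivity of $\phi\mapsto\psi=(\phi|_N)\circ\pi^{-1}$ onto test functions of $\pi(N\cap U)$, needed to conclude $g_Z\equiv 0$, requires shrinking $U$ so that $N\cap U$ is a closed embedded submanifold of $U$ --- an implicit shrinking the paper's proof needs as well.
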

\begin{proof}[Proof of Lemma \ref{pullback}]
Let $\{Q_j=Q_j(z_1,\ldots,z_{n'})\}$ be
a sequence of polynomials
approximating $u$ on some 
neighborhood $U'$ of $0$ in $N'$, as in (\ref{Q}). Since $\mu=\pi^*\mu'$, 
the distributions
$Q_j T_N:\phi\mapsto\int_N Q_j {v} \phi\,d\mu$ approximate the distribution 
in (\ref{T_u}), provided we
take $\phi$ with support in an open $U\subset M$ with 
$p_0\in{U}\cap N\subset\pi^{-1}(U')$. 
Being the products of
a $CR$ distribution by the restriction to $U$ 
of holomorphic functions, the 
$Q_j T_N$ are $CR$ distributions on $U$, and therefore also their
limit in the sense of distributions is a $CR$ distribution on $U$.
This completes the proof of the lemma.
\end{proof}
Since $N'$ does not have the extension property, by Theorem \ref{main} 
there is a
$CR$ distribution $\tilde{u}$
with $\wf_{N'}({\tilde{u}})\cap T_{0}^* N'\not=\emptyset$. 
It remains to check that 
$\wf({T_{\tilde{u}}})$ has the
desired properties. \par
To this purpose, we introduce smooth coordinates 
$(s_1,\ldots,s_{2m+k},t_1,\ldots,t_{\ell})$,
$\ell=d-k$, centered at $p_0$, such that $N=\{t_1=0,\ldots,t_{\ell}=0\}$. 
The distribution $T_{\tilde{u}}$ is a tensor
product
\begin{equation*}
T_{\tilde{u}}=({v}g\tilde{u})\otimes\delta_t,
\end{equation*}
where $\delta_t$ is the Dirac delta in the $t$-variables and $g$ 
is a smooth nonvanishing function such that
$d\mu'=g\,ds_1\ldots ds_{2m+k}$. Since ${v}(p_0)=1$, we can assume after
shrinking that ${v}\neq{0}$ on $U$. Then 
$\wf({u^*{v}g})=\wf({u^*})$ and the general rule to compute 
the wave front set of a tensor product \cite[Theorem 8.2.9]{H} yields
\begin{equation}
\wf({T_{\tilde{u}}})\cap T^*_{p_0}M=
\big(\awf_N({u^*})\times\langle dt_1,\ldots,dt_\ell\rangle\big)
\setminus\{(0,0)\},
\end{equation}
The proof is complete.
\end{proof}
\section{Some subellipticity conditions}\label{sec6}
In this section we recall some results of \cite{AHNP1} that are
relevant for our applications. In the following, $M$ is an abstract
$CR$ manifold, $\Zt(M)=\Gamma(M,T^{0,1}M)$ 
is the distribution of complex vector fields
of type $(0,1)$ on $M$, and ${\Ht}_{{}}(M)=\Gamma(M,HM)$ 
the distribution of the \textit{real} vector fields which are 
real parts elements of  $\Zt(M)$.
\subsection{The system
$\Theta_{{}}(M)$}
\begin{dfn} \label{def:a1} 
Set 
\begin{equation}\label{eq:a1}
 \Theta_{{}}(M)=\left\{Z\in\Zt(M)\left|{\begin{matrix}
\exists r\geq0,\; \exists Z_1,\hdots,Z_r\in\Zt(M),\;
\text{s.t.}\\
i[Z,\bar{Z}]+i\sum_{j=1}^r[Z_j,\bar{Z}_j]
\in{\Ht}_{{}}(M)
\end{matrix}}\right\}\right. .
\end{equation}
We denote by ${\At}_{{}}(M)$ the Lie subalgebra
of $\mathfrak{X}(M)$ generated by the real parts of
vectors in $\Theta_{{}}(M)$. If 
${\Ht}'_{{}}(M)=
\{\mathrm{Re}\,Z\mid Z\in\Theta_{{}}(M)\}$,
\begin{equation*}
  {\At}_{{}}(M)={\Ht}'_{{}}(M)+
[{\Ht}'_{{}}(M),
{\Ht}'_{{}}(M)]+[{\Ht}'_{{}}(M),
[{\Ht}'_{{}}(M),{\Ht}'_{{}}(M)]]+\cdots
\end{equation*}
\end{dfn}
We showed in \cite[Lemma 2.5]{AHNP1} that:
\begin{prop} With the notation introduced above,
$\Theta(M)$ is a left $\mathcal{C}^{\infty}(M)$-submo\-dule of
$\mathfrak{X}^{\mathbb{C}}(M)$. For every $Z\in\Theta_{{}}(M)$ and
every relatively compact open subset $U$ of $M$ there are a finite set
$Z_1,\hdots,Z_r$ of vector fields in $\Zt(M)$ and a constant
$C>0$ such that
\begin{equation*}
  \|\bar{Z}u\|_0^2\leq C\big( \|u\|_0^2+{\sum}_{i=1}^r\|Z_iu\|_0^2\big),\quad
\forall u\in\mathcal{C}^{\infty}_0(U).
\end{equation*}
\end{prop}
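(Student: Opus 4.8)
\emph{The plan} is to treat the two assertions separately: the module structure by direct bracket computations, and the a priori inequality by a Kohn--H\"ormander integration by parts whose essential input is exactly the defining property of $\Theta(M)$. First I would check stability under multiplication by $f\in\mathcal{C}^\infty(M)$. Given $Z\in\Theta(M)$ with witnesses $Z_1,\dots,Z_r\in\Zt(M)$, so that $H_0:=i[Z,\bar Z]+i\sum_{j}[Z_j,\bar Z_j]\in\Ht(M)$, I expand
\begin{equation*}
[fZ,\overline{fZ}]=|f|^2[Z,\bar Z]+f(Z\bar f)\bar Z-\bar f(\bar Z f)Z,
\end{equation*}
and likewise for each $Z_j$. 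Summing and multiplying by $i$, the second-order part collapses to $|f|^2H_0$, which lies in $\Ht(M)$ since $\Ht(M)=\Gamma(M,HM)$ is a module over the real smooth functions and $|f|^2\ge0$. Each residual term $if(Z\bar f)\bar Z-i\bar f(\bar Zf)Z$ has the shape $A+\bar A$ with $A=if(Z\bar f)\bar Z$, i.e. it is twice the real part of the $(0,1)$ field $\bar A=-i\bar f(\bar Zf)Z$, hence belongs to $\Ht(M)$; similarly for the $Z_j$-terms. Thus $i[fZ,\overline{fZ}]+i\sum_j[fZ_j,\overline{fZ_j}]\in\Ht(M)$, exhibiting $fZ\in\Theta(M)$ with witnesses $fZ_1,\dots,fZ_r$.

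For additivity I would use polarization. If $Z,W\in\Theta(M)$ with witnesses $Z_1,\dots,Z_r$ and $W_1,\dots,W_s$, the identity $[Z+W,\overline{Z+W}]+[Z-W,\overline{Z-W}]=2[Z,\bar Z]+2[W,\bar W]$ gives, after multiplying by $i$ and adding $2i\sum_j[Z_j,\bar Z_j]+2i\sum_k[W_k,\bar W_k]$,
\begin{equation*}
i[Z+W,\overline{Z+W}]+i[Z-W,\overline{Z-W}]+2i{\sum}_j[Z_j,\bar Z_j]+2i{\sum}_k[W_k,\bar W_k]=2H_Z+2H_W\in\Ht(M),
\end{equation*}
where $H_Z,H_W\in\Ht(M)$ are the defining elements of $Z,W$. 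Since $Z-W\in\Zt(M)$, the bracket $i[Z-W,\overline{Z-W}]$ is itself an \emph{admissible witness term}, so the displayed relation is precisely of the form required by \eqref{eq:a1}: it certifies $Z+W\in\Theta(M)$ with witness family $\{Z-W\}\cup\{Z_j,Z_j\}\cup\{W_k,W_k\}\subset\Zt(M)$. Together with the previous paragraph this shows $\Theta(M)$ is a left $\mathcal{C}^\infty(M)$-submodule.

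For the estimate, fix $Z\in\Theta(M)$, a relatively compact $U$, witnesses $Z_1,\dots,Z_r$, and write the defining element as $H=i[Z,\bar Z]+i\sum_j[Z_j,\bar Z_j]=\re W=\tfrac12(W+\bar W)$ with $W\in\Zt(M)$. Let $(\cdot,\cdot)_0$ be the $L^2(\mu)$ inner product. Integration by parts shows the $L^2(\mu)$-adjoint of any $V\in\Zt(M)$ is $-\bar V$ plus a zeroth-order term, whence the fundamental identity
\begin{equation*}
\|\bar Vu\|_0^2-\|Vu\|_0^2=-\big([V,\bar V]u,u\big)_0+R_V(u),\qquad u\in\mathcal{C}^\infty_0(U),
\end{equation*}
with $R_V(u)$ a sum of terms $(b\,Vu,u)_0$ and $(b'\,\bar Vu,u)_0$, $b,b'\in\mathcal{C}^\infty(U)$. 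Applying this to $V=Z$ and to each $V=Z_j$ and summing, the total commutator term $-\big(([Z,\bar Z]+\sum_j[Z_j,\bar Z_j])u,u\big)_0$ equals $i(Hu,u)_0=\tfrac{i}{2}\big[(Wu,u)_0+(\bar Wu,u)_0\big]$; replacing $\bar W$ by its adjoint turns this into $-\im(Wu,u)_0$ up to lower order, a quantity bounded by $\varepsilon\|Wu\|_0^2+C_\varepsilon\|u\|_0^2$. The decisive point is that, thanks to $H\in\Ht(M)$, the dangerous second-order contribution has been replaced by quantities involving only the \emph{good} $(0,1)$ directions $Z,Z_j,W$.

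It then remains to reorganize. Keeping $\|\bar Zu\|_0^2+\sum_j\|\bar Z_ju\|_0^2$ on the left, each error term carrying a $(1,0)$ direction $\bar Z,\bar Z_j$ is estimated by Cauchy--Schwarz as $\varepsilon(\|\bar Zu\|_0^2+\sum_j\|\bar Z_ju\|_0^2)+C_\varepsilon\|u\|_0^2$ and absorbed on the left for $\varepsilon$ small, while terms in $Z,Z_j,W$ are retained on the right; here relative compactness of $U$ is what keeps the coefficients $b,b'$ bounded, so the constants are finite. Discarding the nonnegative $\sum_j\|\bar Z_ju\|_0^2$ yields $\|\bar Zu\|_0^2\le C(\|u\|_0^2+\|Zu\|_0^2+\sum_j\|Z_ju\|_0^2+\|Wu\|_0^2)$, which is the asserted inequality once $W$ is appended to the list $Z_1,\dots,Z_r$ (all members lying in $\Zt(M)$). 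The main obstacle I expect is precisely this last bookkeeping: one must carefully separate the good $(0,1)$ from the bad $(1,0)$ directions and check that every bad contribution carries an absorbable small factor, the structural reason behind the whole argument being that membership in $\Theta(M)$ forces the diagonal Levi term to lie in $\Ht(M)$.
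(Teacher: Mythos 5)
The paper offers no proof of this proposition to compare against: it is stated as a quotation of \cite[Lemma 2.5]{AHNP1} (``We showed in [AHNP1, Lemma 2.5] that\dots''), so your argument must be judged on its own, and it is correct --- a self-contained substitute for the citation. All three ingredients check out. For multiplication: $[fZ,\overline{fZ}]=|f|^{2}[Z,\bar Z]+f(Z\bar f)\bar Z-\bar f(\bar Zf)Z$ is the right Leibniz expansion, and the residual first-order term equals $2\re\bigl(-i\bar f(\bar Zf)Z\bigr)$, the real part of a section of $T^{0,1}M$, hence lies in $\Ht(M)$; this exhibits $fZ\in\Theta(M)$ with witnesses $fZ_{j}$. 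For addition: the polarization identity is correct, and the key observation --- that \eqref{eq:a1} places no positivity requirement on witness fields, so $i[Z-W,\overline{Z-W}]$ may itself serve as a witness term --- is exactly what makes closure under sums work (realize the doubled sums by listing each $Z_{j},W_{k}$ twice or replacing them by $\sqrt{2}\,Z_{j},\sqrt{2}\,W_{k}$, all legitimate members of $\Zt(M)$). For the estimate: the integration-by-parts identity $\|\bar Vu\|_{0}^{2}-\|Vu\|_{0}^{2}=-([V,\bar V]u,u)_{0}+R_{V}(u)$ holds with $R_{V}$ of first order, and summing over $V\in\{Z,Z_{1},\dots,Z_{r}\}$ converts the Levi term into $i(Hu,u)_{0}=-\im(Wu,u)_{0}+O(\|u\|_{0}^{2})$ precisely because $H=\re W$ with $W\in\Zt(M)$; Cauchy--Schwarz, absorption of the barred terms into the left side, and boundedness of the coefficient sup-norms on the relatively compact $U$ then yield $\|\bar Zu\|_{0}^{2}\le C\bigl(\|u\|_{0}^{2}+\|Zu\|_{0}^{2}+\sum_{j}\|Z_{j}u\|_{0}^{2}+\|Wu\|_{0}^{2}\bigr)$, which has the asserted form. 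Two harmless glosses you should make explicit: the existence of a \emph{smooth global} $W\in\Zt(M)$ with $H=\re W$ uses that the real-part map $T^{0,1}M\to HM$ is a smooth bundle isomorphism (injective because $T^{0,1}M\cap\overline{T^{0,1}M}=\underline{0}$), and the final witness list must include $Z$ itself as well as $W$ --- your displayed inequality already carries $\|Zu\|_{0}^{2}$ on the right, so this is only a matter of wording.
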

Hence, by \cite[Corollary 1.15]{AHNP1}, we obtain
\begin{thm}\label{thm:a3}
Let ${\Mt}_{{}}(M)$ be the 
${\At}_{{}}(M)$-Lie 
submodule of $\mathfrak{X}(M)$
generated by ${\Ht}_{{}}(M)$:
\begin{equation}
  \label{eq:a2}\begin{aligned}
 {\Mt}_{{}}(M)={\Ht}_{{}}(M)+
[{\At}_{{\Zt}}(M),{\Ht}_{{}}(M)]
\qquad\qquad\qquad\\
+
[{\At}_{{\Zt}}(M),
[{\At}_{{\Zt}}(M),{\Ht}_{{}}(M)]]+\cdots
\end{aligned}
\end{equation}
If
\begin{equation}\label{eq:a3}
  \{X_{p_0}\mid X\in{\Mt}_{{}}(M)\}=T_{p_0}M,
\end{equation}
then 
the system $\Zt(M)$ is subelliptic at $p_0$. This means that there
exists an open neighborhood $U$ of $p_0$ in $M$, vector fields
$Z_1,\hdots,Z_n\in\Zt(M)$, and constants $C,\varepsilon>0$ such that
\begin{equation}
  \label{eq:a4}
  \|u\|_{\varepsilon}^2\leq C\big(\|u\|_0^2+{\sum}_{i=1}^n\|Z_iu\|_0^2\big),\quad
\forall u\in\mathcal{C}^{\infty}_0(U).
\end{equation}
\end{thm}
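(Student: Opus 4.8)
The plan is to obtain the subelliptic estimate \eqref{eq:a4} from Hörmander's bracket-generating mechanism, feeding it the single nontrivial geometric input, namely the a priori bound of the Proposition above, and reducing the spanning hypothesis on $\Mt(M)$ to a finite-type condition for a finite system of controllable fields. Throughout I fix the quadratic form $Q(u)=\|u\|_0^2+\sum_i\|Z_iu\|_0^2$ for a finite subsystem $Z_1,\ldots,Z_n\in\Zt(M)$ that will be enlarged as needed, and I call a complex vector field $Y$ \emph{subelliptic of order} $\delta\in(0,1]$ on an open $U$ if $\|Yu\|_{\delta-1}^2\le C\,Q(u)$ for all $u\in\mathcal{C}^\infty_0(U)$.

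First I would record which fields are controlled, and to what order. Every $Z\in\Zt(M)$ placed in the system is trivially of order $1$, since $\|Zu\|_0^2\le Q(u)$. An elementary integration by parts — the computation underlying the basic estimate for an involutive system — shows that for an \emph{arbitrary} $Z\in\Zt(M)$ the conjugate $\bar Z$, and hence every real field in $\Ht(M)$, is subelliptic of order $1/2$: the Levi term $\langle[Z,\bar Z]u,u\rangle$ that would obstruct order-$1$ control is harmless at the half-derivative level, since $\Lambda^{-1}[Z,\bar Z]$ is of order $0$. The content of the Proposition is precisely that along the pseudoconcave directions the order is upgraded: for $Z\in\Theta(M)\subset\Zt(M)$ one has $\|\bar Zu\|_0^2\le C\,Q(u)$ after adjoining finitely many $Z_i$ to the system, so both $Z$ and $\bar Z$, and therefore the real fields $\mathrm{Re}\,Z,\mathrm{Im}\,Z$ that span $\Ht'(M)$, are subelliptic of order $1$. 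This is exactly why $\At(M)$ is generated by $\Ht'(M)$ rather than by all of $\Ht(M)$: the $\Theta$-condition $i[Z,\bar Z]+i\sum_j[Z_j,\bar Z_j]\in HM$ keeps the Levi contributions inside the controllable distribution $HM$.

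Next I would run the commutator iteration. The key lemma is the standard one behind Hörmander's theorem: the bracket of a field of order $\delta_1>0$ with a field of order $\delta_2>0$ is again subelliptic of some order $\delta_3>0$ (in the normalisation where bracketing with an order-$1$ field halves the order). Since $\At(M)$ is generated by the order-$1$ fields $\Ht'(M)$, every iterated bracket occurring in $\At(M)$ is of positive order; and since $\Mt(M)$ is the $\At(M)$-module generated by the order-$1/2$ fields $\Ht(M)$, every iterated bracket occurring in $\Mt(M)$ is of positive order as well. Because $T_{p_0}M$ has finite dimension $2m+d$, the hypothesis $\{X_{p_0}\mid X\in\Mt(M)\}=T_{p_0}M$ is realised by finitely many such brackets $X^{(1)},\ldots,X^{(2m+d)}$, which involve only finitely many fields of $\Zt(M)$ and finitely many of $\Theta(M)$; hence finitely many applications of the Proposition suffice, and I can fix once and for all a finite system $Z_1,\ldots,Z_n$. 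By continuity of the bracket values the spanning persists on a neighborhood $U$ of $p_0$.

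Finally I would assemble the estimate: with $X^{(1)},\ldots,X^{(2m+d)}$ forming a frame on a smaller $U$ and each $X^{(j)}$ subelliptic of order $\ge\delta_0$ for a common $\delta_0>0$, an elliptic-regularisation/partition argument recovers $\|u\|_{\delta_0}$ from the directional controls, yielding $\|u\|_\varepsilon^2\le C\,Q(u)$ with $\varepsilon=\delta_0>0$, which is \eqref{eq:a4}. This whole scheme is what \cite[Corollary 1.15]{AHNP1} packages, so in the write-up I would simply invoke it with the a priori bound furnished by the Proposition. The essential geometric difficulty sits in the Proposition itself — converting the algebraic condition on $i[Z,\bar Z]$ into an order-$1$ bound on $\bar Z$ — but that is already available; the main remaining obstacle inside this argument is the bracket bookkeeping, namely verifying that positivity of the subelliptic order is genuinely preserved through the finitely many commutators needed to span, so that the resulting $\varepsilon$ stays strictly positive.
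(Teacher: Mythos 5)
Your proposal is correct and follows essentially the same route as the paper: the paper's entire proof of this theorem consists of feeding the a priori estimate of the preceding Proposition (from \cite[Lemma 2.5]{AHNP1}) into \cite[Corollary 1.15]{AHNP1}, which is exactly what you do, the only difference being that you additionally unpack the H\"ormander--Kohn bracket mechanism (half-order control of $\Ht(M)$, order-one control of $\Ht'(M)$ via the Proposition, positivity of the subelliptic order under brackets, and spanning on a neighborhood) that the cited corollary packages.
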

\subsection{The system ${\Kt}_{{}}(M)$}
Under a certain constant rank assumption on $\Zt(M)$, we can give a more
explicit description of $\Theta_{{}}(M)$.
\begin{dfn}
The \emph{characteristic bundle} $H_{{}}^0M$
of $\Zt(M)$ is the
set 
of
\textit{real} covectors $\xi$ with 
$\langle{Z},{\xi}\rangle=0$ for all $Z\in
{\Zt}(M)$.\par
The  \emph{scalar Levi form} at $\xi\in{H}^0_{p}M$ is 
the Hermitian symmetric form
\begin{equation}
  \label{eq:a5}
  \mathfrak{L}_{\xi}(Z_1,\bar{Z}_2)=i\xi([Z_1,\bar{Z}_2]) \quad
\text{for}\quad Z_1,Z_2\in\Zt(M).
\end{equation}
The value of the right hand side of \eqref{eq:a5}
only depends on the values
$Z_1(p)$, $Z_2(p)$ of  $Z_1,Z_2$
at the base point $p=\pi(\xi)$.
Thus \eqref{eq:a5} is a Hermitian symmetric form on $T^{0,1}_pM$. 
Set:
\begin{gather}
  \label{eq:a6}
  H^{\oplus}_{{}}M
=\left\{\xi\in{H}^0_{{}}M\mid\mathcal{L}_{\xi}\geq{0}\right\},
\\
  \label{eq:a7}
  {\Kt}_{{}}(M)=\{Z\in\Zt(M)\mid\mathcal{L}_{\xi}(Z,
\bar{Z})=0,\;\forall\xi\in{H}^{\oplus}_{{}}M\},\\
\label{eq:a8}
\mathrm{K}_{{}}M=\dot{\bigcup}_{p\in{M}}\mathrm{K}_{p}M
\;\text{with}\;\mathrm{K}_{p}M=\{Z_p\mid Z\in
\mathbb{Z}_{{}}(M)\}. 
\end{gather}
\end{dfn}
We have (see \cite[Proposition 2.13]{AHNP1})
\begin{prop}\label{prop:a5}
${\Kt}_{{}}(M)$ is a left $\mathcal{C}^{\infty}(M)$
submodule of $\Theta_{{}}(M)$.
Assume in addition that 
$H^{\oplus}_{{}}M$ and $\mathrm{K}_{{}}M$ 
are smooth vector bundles on $M$. Then 
\begin{equation}
  \label{eq:a9}
{\Kt}_{{}}(M)=\Theta_{{}}(M).
\end{equation}
\end{prop}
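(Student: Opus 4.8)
The plan is to split the assertion into one elementary inclusion, a routine module computation, and one substantial inclusion, the last being where the smoothness hypotheses genuinely enter. First I would record the relevant pointwise linear algebra. For $\xi\in H^{\oplus}_{{}}M$ the form $\Li_\xi$ is positive semidefinite on $T^{0,1}_pM$, so Cauchy--Schwarz shows that $\Li_\xi(Z,\bar Z)=0$ forces $\Li_\xi(Z,\bar W)=0$ for all $W$; hence $\mathrm{K}_pM=\bigcap_{\xi\in H^{\oplus}_pM}\ker\Li_\xi$ is a $\C$-linear subspace of $T^{0,1}_pM$, and $\Kt_{{}}(M)=\{Z\in\Zt(M)\mid Z_p\in\mathrm{K}_pM\ \forall p\}$. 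Since belonging to $\mathrm{K}_pM$ is a pointwise $\C$-linear condition, $\Kt_{{}}(M)$ is stable under sums and under multiplication by functions in $\mathcal{C}^\infty(M)$, which is the $\mathcal{C}^\infty(M)$-module structure; under the later hypothesis that $\mathrm{K}M$ is a smooth bundle this reads $\Kt_{{}}(M)=\Gamma(M,\mathrm{K}M)$.

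Next I would dispose of the easy inclusion $\Theta_{{}}(M)\subseteq\Kt_{{}}(M)$, which needs no hypothesis. Given $Z\in\Theta_{{}}(M)$, pick $Z_1,\dots,Z_r$ as in \eqref{eq:a1}, so that $V:=i[Z,\bar Z]+i\sum_{j}[Z_j,\bar Z_j]\in\Ht_{{}}(M)=\Gamma(M,HM)$. Pairing $V$ with any $\xi\in H^{\oplus}_pM\subset H^0_pM$ annihilates $V$ (as $\xi$ kills $H_pM$) and converts each bracket into a Levi-form value, giving $\Li_\xi(Z,\bar Z)+\sum_j\Li_\xi(Z_j,\bar Z_j)=0$. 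Every summand is $\geq 0$ because $\Li_\xi\geq 0$, so each vanishes; in particular $\Li_\xi(Z,\bar Z)=0$, and as $\xi,p$ are arbitrary we get $Z\in\Kt_{{}}(M)$.

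The substance is the reverse inclusion $\Kt_{{}}(M)\subseteq\Theta_{{}}(M)$, which yields both the submodule assertion of the first sentence and, with the previous step, the equality. I would first reformulate it pointwise in the quotient $N_pM:=T_pM/H_pM$: let $C_p$ be the closed convex cone generated by the Levi ``squares'' $[\,i[W,\bar W]\,]$, $W\in T^{0,1}_pM$. Its dual cone is exactly $H^{\oplus}_pM$, so its lineality space is $C_p\cap(-C_p)=(H^{\oplus}_pM)^{\perp}$; since $[\,i[Z,\bar Z]\,]$ always lies in $C_p$, the condition $Z\in\Kt_{{}}(M)$ (that is, $[\,i[Z,\bar Z]\,]\perp H^{\oplus}_pM$) is equivalent to $-[\,i[Z,\bar Z]\,]\in C_p$, i.e.\ to the existence of finitely many $W_k\in T^{0,1}_pM$ with $-i[Z,\bar Z]\equiv\sum_k i[W_k,\bar W_k]\ (\mathrm{mod}\ H_pM)$. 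What then remains is to realize this representation by finitely many \emph{global} smooth fields $Z_1,\dots,Z_r\in\Zt(M)$. Here I would use the smoothness of $H^{\oplus}M$ and $\mathrm{K}M$ to obtain, on a locally finite cover, a smooth simultaneous normalization of the Levi-form family: on each chart a smooth frame of $T^{0,1}M$ adapted to $\mathrm{K}M$ expresses $-[\,i[Z,\bar Z]\,]$ as a nonnegative smooth combination of a fixed finite set of square-generators (a parametric Carathéodory step), after which the local solutions are glued by a subordinate partition of unity, the conical structure ensuring that the glued expression is still of the form $i\sum_j[Z_j,\bar Z_j]$ modulo $\Ht_{{}}(M)$.

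The hard part is precisely this smooth globalization. Pointwise the cone representation exists by elementary convex geometry, but selecting the generating vector fields smoothly, with a number $r$ independent of $p$, is what fails in general; it is rescued only by the constant-rank content encoded in the assumptions that $H^{\oplus}M$ and $\mathrm{K}M$ be smooth bundles, which is exactly why this hypothesis appears in the second half of the statement and not in the easy inclusion of the preceding paragraph. Once the global family $Z_1,\dots,Z_r$ is produced we have $Z\in\Theta_{{}}(M)$, and combining this with $\Theta_{{}}(M)\subseteq\Kt_{{}}(M)$ gives \eqref{eq:a9}.
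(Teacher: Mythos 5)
First, a caveat on the comparison itself: the paper does not prove Proposition~\ref{prop:a5} at all — it quotes it from \cite[Proposition 2.13]{AHNP1} — so your proposal can only be judged against what a correct proof must contain. Your first two steps are correct: the Cauchy--Schwarz argument showing $\Kt(M)$ is a $\mathcal{C}^{\infty}(M)$-module, and the unconditional inclusion $\Theta(M)\subseteq\Kt(M)$ obtained by pairing $i[Z,\bar{Z}]+i\sum_j[Z_j,\bar{Z}_j]\in\Ht(M)$ with $\xi\in H^{\oplus}_pM$. The hard inclusion, however, breaks down \emph{before} the step you flag as difficult. You define $C_p$ as the \emph{closed} convex cone generated by the Levi squares, correctly deduce by duality that $Z\in\Kt(M)$ forces $-[\,i[Z,\bar{Z}]_p\,]\in C_p$, and then assert that this is equivalent to the existence of finitely many $W_k$ with $-i[Z,\bar{Z}]\equiv\sum_k i[W_k,\bar{W}_k]\pmod{H_pM}$. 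That equivalence is false: the cone of \emph{finite sums} of Levi squares need not be closed, and membership in its closure gives no finite-sum representation. Concretely, consider the quadric $M\subset\C^5$ of $CR$ dimension $3$ and codimension $2$ whose (constant) vector-valued Levi form is $v\mapsto\bigl(|v_1|^2,\,2\,\mathrm{Re}(v_1\bar{v}_2)+|v_3|^2\bigr)$. Here $H^{\oplus}_pM$ is the ray $\{\xi_2=0,\ \xi_1\geq 0\}$, so the frame field $Z$ with symbol $(0,0,1)$ lies in $\Kt(M)$, and its Levi square $(0,1)$ lies in the lineality space of the closed cone; but $(0,-1)$ is \emph{not} a finite sum of Levi squares (the first components force every $W_k$ to have vanishing first coordinate, after which all second components are $\geq 0$). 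Hence $Z\in\Kt(M)\setminus\Theta(M)$: the pointwise representation you say ``exists by elementary convex geometry'' genuinely fails, i.e.\ the bundle hypotheses are needed already at a single point, not merely for smooth selection as you claim. (This example also shows that your proof structure cannot deliver the first sentence: you make $\Kt(M)\subseteq\Theta(M)$ conditional on the hypotheses, and indeed unconditionally it is false; the unconditional content of the first sentence can only be what you prove in your second paragraph, namely that $\Kt(M)$ is a $\mathcal{C}^{\infty}(M)$-module containing $\Theta(M)$.)

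What the hypothesis on $H^{\oplus}M$ actually buys — and this is the idea missing from your argument — is the following: if $H^{\oplus}M$ is a vector \emph{bundle}, each fiber is a linear subspace, so $\xi\in H^{\oplus}_pM$ implies $-\xi\in H^{\oplus}_pM$, hence $\Li_{\xi}\geq 0$ and $\Li_{\xi}\leq 0$, i.e.\ every $\xi\in H^{\oplus}_pM$ is Levi-flat. Consequently $\overline{C_p}=(H^{\oplus}_pM)^{\perp}$ is itself a linear subspace, and a convex cone whose closure is a linear subspace equals that subspace (a dense convex subset of a finite-dimensional vector space is the whole space). This is what removes the closure obstruction and makes the pointwise finite-sum representation true; without it, as the example shows, there is nothing to globalize. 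After that, the genuinely substantial work — producing finitely many \emph{global smooth} fields $Z_j$ with $i[Z,\bar{Z}]+i\sum_j[Z_j,\bar{Z}_j]\in\Ht(M)$, which is where the smoothness of $\mathrm{K}M$ and constant-rank information enter — is exactly what you leave as ``a parametric Carath\'eodory step'' glued ``by a partition of unity''; this is a hope, not an argument (note, for instance, that multiplying a sum of squares by a cutoff $\chi$ requires a smooth $\sqrt{\chi}$, so even the gluing as described is not sound). In sum: the easy half is fine, but the hard half rests on a false convex-geometric reduction and an unexecuted construction, and it misidentifies where the hypotheses of the proposition do their work.
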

\subsection{Hypoellipticity} \cite{al07}
Subelliptic estimates imply
regularity. We have indeed (see \cite[Theorem 4.1]{AHNP1},
\cite[Theorem 4.3]{HN00}):
\begin{thm} Let $M$ be an $m$-dimensional smooth manifold.
Let $U$ be an open subset of $M$, and
$Z_1,\hdots,Z_n$ complex vector fields on $U$ such that,
for some positive constants $C,\epsilon>0$ 
\eqref{eq:a4} is valid.
If $u\in{L}^2_{loc}$, 
$a_i\in{L}^{\infty}_{loc}(U)$, $f_i\in{L}^2_{loc}(U)$ for $i=1,\hdots,n$
satisfy
\begin{equation}
  \label{eq:a10}
  Z_iu+a_iu=f_i,\quad\text{for}\; i=1,\hdots,n\quad\text{on}\;{U},
\end{equation}
then :
\begin{enumerate}
\item $u\in{W}^{\epsilon}_{loc}(U)$;
\item if $0<s\leq \tfrac{m}{2}$, $a_i\in\mathcal{C}^s(U)$ and
$f_i\in{W}^{s}_{loc}(U)$, then $u\in{W}^{s+\epsilon}_{loc}(U)$;
\item if $s>\tfrac{m}{2}$, $a_i\in{W}^s_{loc}(U)$ and
$f_i\in{W}^{s}_{loc}(U)$, then $u\in{W}^{s+\epsilon}_{loc}(U)$;
\item in particular, if $a_i\in\mathcal{C}^{\infty}(U)$, 
$f_i\in{W}^{s}_{loc}(U)$, then $u\in{W}^{s+\epsilon}_{loc}(U)$.
\end{enumerate}
Here we indicate by $W^s_{loc}(U)$ the $L^2$-Sobolev space of 
order $s$.
\end{thm}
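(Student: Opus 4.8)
The plan is to read \eqref{eq:a4} as an assertion that the \emph{a priori} $L^2$-information carried by $u$ through \eqref{eq:a10} is automatically upgraded by a gain of $\varepsilon$ derivatives, and then to iterate this gain. Since every conclusion is local, I would fix a cutoff $\phi\in\mathcal{C}_0^\infty(U)$ equal to $1$ near the relevant point and work with the compactly supported $\phi u$. Because \eqref{eq:a4} is valid only on $\mathcal{C}_0^\infty(U)$, I never test it on $u$ directly, but on regularizations, Friedrichs mollifiers $J_\delta$ for part (1) and properly supported elliptic pseudodifferential operators $\Lambda^\sigma$ of order $\sigma$ for the bootstrap, passing to the limit by uniform bounds. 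Note that the only non-smooth data are the $a_i$; the $Z_i$ have smooth coefficients, so every commutator $[Z_i,\cdot]$ with a pseudodifferential operator obeys the standard symbolic calculus.

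For (1) I first rewrite \eqref{eq:a10} as $Z_iu=f_i-a_iu$, which together with $u\in L^2_{loc}$, $a_i\in L^\infty_{loc}$, $f_i\in L^2_{loc}$ shows $Z_iu\in L^2_{loc}$. Applying \eqref{eq:a4} to $v=J_\delta(\phi u)$ gives
\[
\|J_\delta(\phi u)\|_\varepsilon^2\le C\Big(\|J_\delta(\phi u)\|_0^2+\sum_{i=1}^n\|Z_iJ_\delta(\phi u)\|_0^2\Big).
\]
Splitting $Z_iJ_\delta(\phi u)=J_\delta\phi\,(Z_iu)+[Z_i,J_\delta\phi]u$, the first summand is bounded in $L^2$ uniformly in $\delta$ since $Z_iu\in L^2_{loc}$, and the commutator is bounded in $L^2$ uniformly in $\delta$ by Friedrichs' lemma. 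Hence the right-hand side is bounded independently of $\delta$, so $\{J_\delta(\phi u)\}$ is bounded in $W^\varepsilon$, and letting $\delta\to0$ yields $\phi u\in W^\varepsilon$, i.e. $u\in W^\varepsilon_{loc}(U)$.

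For the higher regularity I bootstrap in increments of $\varepsilon$: assuming $u\in W^\sigma_{loc}$, I apply \eqref{eq:a4} to $v=\Lambda^\tau\phi u$ with $\tau=\min(\sigma,s)$ and control the right-hand side as above. The commutator $[Z_i,\Lambda^\tau\phi]$ has order $\tau\le\sigma$, so $[Z_i,\Lambda^\tau\phi]u$ is dominated by $\|u\|_\sigma<\infty$, while $\Lambda^\tau\phi f_i$ is dominated by $\|f_i\|_s$. The decisive term is $\Lambda^\tau\phi(a_iu)$, controlled by $\|a_iu\|_\tau$, and this is exactly where the threshold $m/2$ enters through Sobolev multiplication. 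For $0<s\le m/2$ the hypothesis $a_i\in\mathcal{C}^s$ and the bound $\|a_iu\|_\tau\le C\|a_i\|_{\mathcal{C}^s}\|u\|_\tau$, valid for $0\le\tau\le s$, close the estimate; for $s>m/2$ the hypothesis $a_i\in W^s$ and the fact that $W^s\hookrightarrow L^\infty$ is a multiplication algebra acting boundedly on $W^\tau$ for $0\le\tau\le s$ give the same conclusion. In either case $u\in W^{\tau+\varepsilon}_{loc}$, so finitely many steps raise the regularity past $s$ and a final application with $\Lambda^s$ delivers $u\in W^{s+\varepsilon}_{loc}$, proving (2) and (3); statement (4) is the special case $a_i\in\mathcal{C}^\infty$, which satisfies the hypotheses of both (2) and (3) for every $s$.

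The main obstacle is the handling of the low-regularity products $a_iu$: one must arrange the regularization so that every commutator stays of order $\le\tau$ and is absorbed using only the regularity of $u$ already available at that stage, and one must invoke the correct Sobolev multiplication theorem, whose validity pivots precisely on $s=m/2$. Once these product and commutator estimates are secured, the remainder is the classical subelliptic bootstrap driven by the uniform gain $\varepsilon$.
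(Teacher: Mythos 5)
First, a point of comparison: the paper does not prove this theorem at all. It is recalled from the literature, with the proof attributed to \cite[Theorem 4.1]{AHNP1} and \cite[Theorem 4.3]{HN00}, so there is no internal argument to measure your proposal against. Your outline is the standard route and agrees in spirit with those sources: Friedrichs mollification of $\phi u$ combined with \eqref{eq:a4} for the initial gain, then a pseudodifferential bootstrap in increments of $\epsilon$. Your part (1) is correct (the splitting $Z_iJ_\delta(\phi u)=J_\delta(\phi\, Z_iu)+[Z_i,J_\delta\phi]u$ with the uniform commutator bound is exactly the right mechanism), and your treatment of case (3) and of case (4) is sound modulo routine facts about commutators of the smooth vector fields $Z_i$ with properly supported operators.

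The genuine gap is in case (2). The multiplication estimate you invoke, $\|a_iu\|_\tau\le C\|a_i\|_{\mathcal{C}^s}\|u\|_\tau$ claimed ``valid for $0\le\tau\le s$,'' is false at the endpoint $\tau=s$ when $s$ is not an integer: multiplication by a H\"older function of class $\mathcal{C}^s=B^s_{\infty,\infty}$ preserves $H^\tau$ only for $|\tau|<s$, and at $\tau=s$ the part of the product in which $a_i$ carries the high frequencies lands only in $B^s_{2,\infty}$, which strictly contains $H^s$ --- a genuine logarithmic loss. To see that no such estimate can hold, test it on a cutoff $u$ equal to $1$ near the support of $a_i$: it would force every compactly supported $\mathcal{C}^s$ function to lie in $W^s$, which fails for the Weierstrass-type function $a(x)=\chi(x)\sum_{j\ge 1}2^{-js}\cos(2^jx_1)$ ($0<s<1$, $\chi$ a smooth cutoff), whose dyadic blocks satisfy $2^{js}\|\Delta_j a\|_{L^2}\approx c>0$ and hence are not square-summable. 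Consequently your bootstrap closes only at levels strictly below $s$: every step from level $\tau$ requires $\tau<s$, so you obtain $u\in W^{\sigma}_{loc}$ for all $\sigma<s+\epsilon$ at best, while the final step --- which needs precisely $\|a_iu\|_{s}<\infty$ --- does not go through, and the asserted endpoint conclusion $u\in W^{s+\epsilon}_{loc}$ is not reached. Note that for integer $s$ your estimate is rescued by the Leibniz rule and interpolation, and that in case (3) the hypothesis $a_i\in W^s$ with $s>\tfrac{m}{2}$ supplies exactly the $\ell^2$-summability of the dyadic blocks of $a_i$ that $\mathcal{C}^s$ lacks; this, and not only the embedding $W^s\hookrightarrow L^\infty$, is the real reason the statement switches spaces at $\tfrac{m}{2}$. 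Closing case (2) at the endpoint therefore requires a sharper device than the multiplication theorem --- for instance commutator or paradifferential estimates in which only the paraproduct $T_{a_i}$, bounded on every $H^\tau$, is moved past the smoothing operators, with the remainder controlled by the regularity of $u$ already gained and by the equation \eqref{eq:a10} itself --- and supplying that device is exactly what is missing from your sketch.
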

Then we obtain from Lemma\,\ref{lem32a}:
\begin{cor}
  If \eqref{eq:a3} holds true, then $\tO_{M,(p_0)}^{-\infty}=\tO_{M,(p_0)}^{\infty}$.
\end{cor}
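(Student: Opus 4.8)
The plan is to chain together the three ingredients assembled in this section: the subelliptic estimate produced by Theorem\,\ref{thm:a3} from hypothesis \eqref{eq:a3}, the subelliptic regularity theorem stated just above, and the reduction to $L^2_{loc}$ germs furnished by Corollary\,\ref{lem32a}. Concretely, I would first invoke Theorem\,\ref{thm:a3}: since \eqref{eq:a3} holds at $p_0$, the system $\Zt(M)$ is subelliptic there, so there are an open neighborhood $U$ of $p_0$, vector fields $Z_1,\hdots,Z_n\in\Zt(M)$, and constants $C,\varepsilon>0$ for which the basic estimate \eqref{eq:a4} holds on $\cC^{\infty}_0(U)$.

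By Corollary\,\ref{lem32a} it suffices to prove that every germ $u\in\cO^{-\infty}_{M,(p_0)}$ lying in $L^2_{loc}$ near $p_0$ is in fact smooth near $p_0$. So I fix such a $u$, represented (after shrinking $U$) by a distribution on $U$ with $u\in L^2_{loc}(U)$. Because $u$ is a $CR$ distribution, it is annihilated by every section of $T^{0,1}M$, in particular by the specific fields $Z_1,\hdots,Z_n$ appearing in \eqref{eq:a4}. Thus $u$ solves \eqref{eq:a10} with $a_i\equiv0$ and $f_i\equiv0$, which places us exactly in the hypothesis of the regularity theorem above.

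Now the regularity theorem does the work. Part (1) already yields $u\in W^{\varepsilon}_{loc}(U)$. Since $a_i\equiv0$ is smooth and $f_i\equiv0$ lies in $W^{s}_{loc}(U)$ for \emph{every} $s\ge0$, part (4) applies with arbitrarily large $s$ and gives $u\in W^{s+\varepsilon}_{loc}(U)$ for all $s$; equivalently, one iterates part (4) starting from $s=0$, gaining $\varepsilon$ derivatives at each step, to reach $u\in W^{k\varepsilon}_{loc}(U)$ for every $k\in\mathbb{N}$. In either reading $u\in\bigcap_{s\ge0}W^{s}_{loc}(U)$, so by the Sobolev embedding theorem $u$ is $\cC^{\infty}$ on a neighborhood of $p_0$, i.e. $u\in\cO^{\infty}_{M,(p_0)}$.

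Having shown that every $L^2_{loc}$ germ of a $CR$ distribution is smooth at $p_0$, Corollary\,\ref{lem32a} immediately gives $\cO^{-\infty}_{M,(p_0)}=\cO^{\infty}_{M,(p_0)}$, that is, $M$ is $CR$-hypoelliptic at $p_0$. The only point requiring any care is the bootstrap, namely checking that part (4) of the regularity theorem can be fed arbitrarily high Sobolev data; here this is painless, because the identical vanishing of $a_i$ and $f_i$ removes any competition between the regularity of the unknown $u$ and that of the data, and every requirement $f_i\in W^{s}_{loc}$ is met for all $s$ at once. I therefore expect no genuine obstacle: the whole content sits in Theorem\,\ref{thm:a3} and the regularity theorem, and the corollary is just their formal combination through Corollary\,\ref{lem32a}.
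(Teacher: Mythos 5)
Your proposal is correct and follows exactly the route the paper intends: the paper offers no written-out argument beyond the phrase ``Then we obtain from Lemma\,\ref{lem32a}'', which is precisely your chain of Theorem\,\ref{thm:a3} (subellipticity from \eqref{eq:a3}), the regularity theorem with $a_i\equiv 0$, $f_i\equiv 0$ (hence $u\in W^{s+\varepsilon}_{loc}$ for every $s$, so $u$ is smooth by Sobolev embedding), and the reduction of Corollary\,\ref{lem32a} from distribution germs to $L^2_{loc}$ germs. Your observation that part (4) applies directly for every $s$ at once, with no genuine bootstrap needed since the data vanish identically, is accurate.
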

\section{Examples}\label{sec7}
A large class of examples of $CR$ submanifolds of complex manifolds is
provided by the orbits of the real forms in complex flag manifolds.
We recall that a complex flag manifold
is a compact homogeneous space $X$ of a semisimple complex Lie group
$\mathbf{G}$. The isotropy of a point of $X$ is a \textit{parabolic}
subgroup $\mathbf{Q}$ of $\mathbf{G}$, i.e. a closed
connected subgroup whose Lie algebra $\mathfrak{q}$ contains a maximal
solvable Lie subalgebra $\mathfrak{b}$ of the Lie algebra $\mathfrak{g}$
of $\mathbf{G}$. If $\mathbf{G}_0$ is a \textit{real form} of
$\mathbf{G}$, i.e. a connected real Lie subgroup of $\mathbf{G}_0$
with Lie algebra $\mathfrak{g}_0$ such that 
\mbox{$\mathfrak{g}\!
=\!\mathfrak{g}_0\!\oplus\!\!{i}\mathfrak{g}_0$}, then $\mathbf{G}_0$
has finitely many orbits in $X$. In particular, there are open orbits and
a minimal orbit $M$ which is compact (see \cite{Wolf69}). 
The structure of the orbits only depend on the Lie algebras involved,
and are therefore completely determined by the pair 
$(\mathfrak{g}_0,\mathfrak{q})$, which is called a
\emph{$CR$ algebra}, consisting
of the Lie algebra of the real form $\mathbf{G}_0$
and of the Lie algebra of the parabolic
subgroup $\mathbf{Q}$.  \par
The embedding of $M$ in $X$ defines
a $CR$ structure on $M$. 
The minimal orbits are classified by their \emph{cross-marked Satake diagrams}.
A complete list of these diagrams is given e.g. in the appendix to \cite{AMN06}. 
Many properties of the minimal orbits are read off these diagrams:
minimality is equivalent to the fact that the corresponding
\textit{$CR$ algebra} $(\mathfrak{g}_0,\mathfrak{q})$ is fundamental and is 
described by \cite[Theorem\,9.3]{AMN06}. In \cite[\S{13}]{AMN06} 
all \textit{essentially pseudoconcave} minimal orbits
are classified in terms of their associated diagrams.
Since essential pseudoconcavity (see \cite{HN00}) implies
\eqref{eq:a3}, all these orbits are at every point $CR$-hypoelliptic and
therefore have the holomorphic extension property by Theorem\ref{main}.
Globally defined $CR$ functions on this class of $CR$ manifolds 
and their properties were considered
in \cite{al07}.\par
We give below some more explicit examples to illustrate this application.\par
Let $X$ be the complex flag manifold consisting of the 
flags
\begin{equation*}
 \ell_1\subset\ell_3\subset\cdots \subset\ell_{2k-1}\subset
\ell_{2k+2}\subset\cdots\subset\ell_{4k-2} \subset\C^{4k},
\end{equation*}
where $k$ is a positive integer and 
$\ell_i$ is a $\C$-linear subspace of dimension $i$ of $\C^{4k}$.
Let $M$ be the minimal orbit for the action of the group
$\mathbf{SU}(2k,2k)$ of complex 
$4\! k\!\times \! 4\! k$ matrices that leave invariant
a Hermitian symmetric form of signature $(2k,2k)$. Then $M$ has
$CR$ dimension $2k$ and $CR$ codimension $8k^2-6k-1$ and we need 
$2k$ commutators of $\Ht(M)$ to span $TM$ (these
numbers were computed in \cite{MN98}). However, $M$ is minimal
and essentially pseudoconcave and therefore is $CR$-hypoelliptic and
has the holomorphic extension property at all points.
\par
Another example is the minimal orbit of the special group $\mathbf{G}_0$
of type $E_6III$ corresponding to the cross-marked Satake diagram
\par\bigskip
{}
\par\medskip
\begin{equation*}
  \xymatrix@R=-.25pc{\medcirc\!\!\ar@{-}[r]\ar@{<->}@/^2pc/[rrrr]
&\!\!\medcirc\!\!\ar@{-}[r]\ar@{<->}@/^1pc/[rr]
&\!\!\medcirc\!\! \ar@{-}[ddddd]
\ar@{-}[r]
&\!\!\medcirc\!\!\ar@{-}[r]
&\!\!\medcirc\\
\\ \times &&&\times
\\
\\
\\
&& \medcirc
} 
\end{equation*}
It corresponds to a $CR$ manifold of $CR$ dimension $4$ and
$CR$ codimension $25$, with $6$ commutations needed to span
$TM$ from $\Ht(M)$. This is also essentially pseudoconcave and
therefore is $CR$-hypoelliptic and has the holomorphic extension property
at each point.
\bibliographystyle{amsplain}

\renewcommand{\MR}[1]{}
\bibliography{nwext}

\begin{thebibliography}{10}

\bibitem{al07}
A.~{Altomani}, \emph{{Global CR functions on parabolic CR manifolds}}, ArXiv
  Mathematics e-prints (2007).

\bibitem{AHNP1}
A.~Altomani, C.~D. Hill, M.~Nacinovich, and E.~Porten, \emph{Complex vector
  fields and hypoelliptic partial differential operators}, Ann. Inst. Fourier
  (Grenoble) \textbf{60} (2010), no.~3, 987--1034. \MR{2680822}

\bibitem{AHNP2}
\bysame, \emph{Holomorphic extension from weakly pseudoconcave {CR} manifolds},
  Rend. Semin. Mat. Univ. Padova \textbf{123} (2010), 69--90. \MR{2683292}

\bibitem{AMN06}
A.~Altomani, C.~Medori, and M.~Nacinovich, \emph{The {CR} structure of minimal
  orbits in complex flag manifolds}, J. {L}ie Theory \textbf{16} (2006), no.~3,
  483--530.

\bibitem{AF79}
A.~Andreotti and G.A. Fredricks, \emph{Embeddability of real analytic
  {C}auchy-{R}iemann manifolds}, Ann. Scuola Norm. Sup. Pisa Cl. Sci. (4)
  \textbf{6} (1979), no.~2, 285--304. \MR{541450 (80h:32019)}

\bibitem{ba-ro}
M.~S. Baouendi and Linda~Preiss Rothschild, \emph{Cauchy-{R}iemann functions on
  manifolds of higher codimension in complex space}, Invent. Math. \textbf{101}
  (1990), no.~1, 45--56. \MR{1055709 (91j:32020)}

\bibitem{ba-tr}
M.~S. Baouendi and F.~Tr{\`e}ves, \emph{A property of the functions and
  distributions annihilated by a locally integrable system of complex vector
  fields}, Ann. of Math. (2) \textbf{113} (1981), no.~2, 387--421. \MR{607899
  (82f:35057)}

\bibitem{BER}
M.~Salah Baouendi, P.~Ebenfelt, and L.~P. Rothschild, \emph{Real submanifolds
  in complex space and their mappings}, Princeton Mathematical Series, vol.~47,
  Princeton University Press, Princeton, NJ, 1999. \MR{1668103 (2000b:32066)}

\bibitem{BP82}
A.~Boggess and J.C. Polking, \emph{Holomorphic extension of {CR} functions},
  Duke Math. J. \textbf{49} (1982), no.~4, 757--784. \MR{683002 (84j:32018)}

\bibitem{HN00}
C.~D. Hill and M.~Nacinovich, \emph{A weak pseudoconcavity condition for
  abstract almost {$CR$} manifolds}, Invent. Math. \textbf{142} (2000),
  251--283.

\bibitem{NH04a}
\bysame, \emph{Fields of {CR} meromorphic functions}, Rend. Sem. Mat. Univ.
  Padova \textbf{111} (2004), 179--204. \MR{2076739 (2005j:32039)}

\bibitem{NH05a}
\bysame, \emph{Elementary pseudoconcavity and fields of {CR} meromorphic
  functions}, Rend. Sem. Mat. Univ. Padova \textbf{113} (2005), 99--115.
  \MR{2168982 (2006f:32028)}

\bibitem{H}
L.~H{\"o}rmander, \emph{The analysis of linear partial differential operators.
  {I}}, Classics in Mathematics, Springer-Verlag, Berlin, 2003, Distribution
  theory and Fourier analysis, Reprint of the second (1990) edition [Springer,
  Berlin; MR1065993 (91m:35001a)]. \MR{1996773}

\bibitem{ja-pf}
M.~Jarnicki and P.~Pflug, \emph{Extension of holomorphic functions}, de Gruyter
  Expositions in Mathematics, vol.~34, Walter de Gruyter \& Co., Berlin, 2000.
  \MR{1797263 (2001k:32017)}

\bibitem{J96}
B.~J{\"o}ricke, \emph{Deformation of {CR}-manifolds, minimal points and
  {CR}-manifolds with the microlocal analytic extension property}, J. Geom.
  Anal. \textbf{6} (1996), no.~4, 555--611 (1997). \MR{1601405 (99f:32026)}

\bibitem{MN98}
C.~Medori and M.~Nacinovich, \emph{Classification of semisimple {L}evi-{T}anaka
  algebras}, Ann. Mat. Pura Appl. (4) \textbf{174} (1998), 285--349.
  \MR{MR1746933 (2001e:17037)}

\bibitem{Me94}
J.~Merker, \emph{Global minimality of generic manifolds and holomorphic
  extendibility of {CR} functions}, Internat. Math. Res. Notices (1994), no.~8,
  329 ff., approx.\ 14 pp.\ (electronic). \MR{1289578 (95i:32015)}

\bibitem{me-po1}
J.~Merker and E.~Porten, \emph{Holomorphic extension of {CR} functions,
  envelopes of holomorphy, and removable singularities}, IMRS Int. Math. Res.
  Surv. (2006), Art. ID 28925, 287. \MR{2270252 (2007m:32020)}

\bibitem{naci-va}
M.~Nacinovich and G.~Valli, \emph{Tangential {C}auchy-{R}iemann complexes on
  distributions}, Ann. Mat. Pura Appl. (4) \textbf{146} (1987), 123--160.
  \MR{916690 (88k:32054)}

\bibitem{Pf}
Peter Pflug, \emph{\"{U}ber polynomiale {F}unktionen auf
  {H}olomorphiegebieten}, Math. Z. \textbf{139} (1974), 133--139. \MR{0355102
  (50 \#7579)}

\bibitem{Su73}
H.~Sussmann, \emph{Orbits of families of vector fields and integrability of
  distributions}, Trans. Amer. Math. Soc. \textbf{180} (1973), 171--188.

\bibitem{tr1}
J.-M. Tr{\'e}preau, \emph{Sur le prolongement holomorphe des fonctions {C}-{R}
  d\'efines sur une hypersurface r\'eelle de classe {$C^2$} dans {${\bf
  C}^n$}}, Invent. Math. \textbf{83} (1986), no.~3, 583--592. \MR{827369
  (87f:32035)}

\bibitem{Tr2}
\bysame, \emph{Sur la propagation des singularit\'es dans les vari\'et\'es
  {CR}}, Bull. Soc. Math. France \textbf{118} (1990), no.~4, 403--450.
  \MR{1090408 (92b:58229)}

\bibitem{t92}
F.~Tr{\`e}ves, \emph{Hypo-analytic structures}, Princeton Mathematical Series,
  vol.~40, Princeton University Press, Princeton, NJ, 1992, Local theory.
  \MR{1200459 (94e:35014)}

\bibitem{Tu1}
A.~E. Tumanov, \emph{Extension of {CR}-functions into a wedge from a manifold
  of finite type}, Mat. Sb. (N.S.) \textbf{136(178)} (1988), no.~1, 128--139.
  \MR{945904 (89m:32027)}

\bibitem{Tu2}
\bysame, \emph{Extension of {CR}-functions into a wedge}, Mat. Sb. \textbf{181}
  (1990), no.~7, 951--964. \MR{1070489 (91f:32010)}

\bibitem{Wolf69}
J.~A. Wolf, \emph{The action of a real semisimple group on a complex flag
  manifold. {I}. {O}rbit structure and holomorphic arc components}, Bull. Amer.
  Math. Soc. \textbf{75} (1969), 1121--1237.

\end{thebibliography}
\providecommand{\bysame}{\leavevmode\hbox to3em{\hrulefill}\thinspace}
\providecommand{\MR}{\relax\ifhmode\unskip\space\fi MR }
\providecommand{\MRhref}[2]{%
  \href{http://www.ams.org/mathscinet-getitem?mr=#1}{#2}
}
\providecommand{\href}[2]{#2}

\end{document}